\title{Interpreting and using CPDAGs with background knowledge}
\author{ {\bf Emilija Perkovi\'c} \\
Seminar for Statistics \\
ETH Zurich, Switzerland\\
perkovic@stat.math.ethz.ch \\
\And
{\bf Markus Kalisch}\\
Seminar for Statistics \\
ETH Zurich, Switzerland\\
kalisch@stat.math.ethz.ch\\
\And
{\bf Marloes H. Maathuis}\\
Seminar for Statistics \\
ETH Zurich, Switzerland\\
maathuis@stat.math.ethz.ch\\
}
\newtheorem{theorem}{Theorem}[section]
\newtheorem{lemma}[theorem]{Lemma}
\newtheorem{definition}[theorem]{Definition}
\newtheorem{example}[theorem]{Example}
\theoremstyle{definition}
      \newenvironment{proofof}[1][]{\begin{trivlist}
   \item[\hskip \labelsep {\bfseries Proof of #1.}]}{\hfill{}$\square$\end{trivlist}}
\theoremstyle{remark}
\newtheorem*{remark}{Remark}
\definecolor{blue-violet}{rgb}{0.54, 0.17, 0.89}
\definecolor{antiquefuchsia}{rgb}{0.57, 0.36, 0.51}
\definecolor{amethyst}{rgb}{0.6, 0.4, 0.8}
\definecolor{blue-violet}{rgb}{0.54, 0.17, 0.89}
\definecolor{ao}{rgb}{0.0, 0.5, 0.0}
\definecolor{blue(ncs)}{rgb}{0.0, 0.53, 0.74}
\definecolor{dgreen}{rgb}{0.12, 0.3, 0.17}
\definecolor{cadmiumgreen}{rgb}{0.0, 0.42, 0.24}
\definecolor{darkolivegreen}{rgb}{0.33, 0.42, 0.18}
\definecolor{dartmouthgreen}{rgb}{0.05, 0.5, 0.06}
\newcommand{\tild}{\raise.17ex\hbox{ $\scriptstyle\sim$ }}
\DeclareMathOperator{\PDAG}{PDAG}
\DeclareMathOperator{\CPDAG}{CPDAG}
\DeclareMathOperator{\DAG}{DAG}
\DeclareMathOperator{\PAG}{PAG}
\DeclareMathOperator{\Sib}{Sib}
\DeclareMathOperator{\bPossDe}{b-PossDe}
\DeclareMathOperator{\bPossAn}{b-PossAn}
\DeclareMathOperator{\De}{De}
\DeclareMathOperator{\An}{An}
\DeclareMathOperator{\Pa}{Pa}
\DeclareMathOperator{\Adj}{Adj}
\DeclareMathOperator{\bForbb}{b-Forb}
\DeclareMathOperator{\bAdjust}{b-Adjust}
\DeclareMathOperator{\distancefrom}{distance-from-\!}
\DeclareMathOperator{\ConstructMaxPDAG}{ConstructMaxPDAG}
\newcommand{\forb}{the forbidden set condition}
\newcommand{\bamen}{the b-amenability condition}
\newcommand{\bforb}{the b-forbidden set condition}
\newcommand{\bblck}{the b-blocking condition}
\newcommand{\mpdag}{maximal $\PDAG$}
\newcommand{\Mpdag}{Maximal $\PDAG$}
\newcommand{\MPDAG}{MAXIMAL PDAG}
\newcommand{\vars}[1][V]{\mathbf{#1}}
\newcommand{\e}[1][E]{\mathbf{#1}}
\newcommand{\g}[1][G]{\mathcal{#1}}
\newcommand{\badjustb}[2][X,Y]{\bAdjust(\mathbf{#1},#2)}
\newcommand{\bbf}[2][X,Y]{\bForbb(#1,#2)}
\newcommand{\bfb}[2][X,Y]{\bForbb(\mathbf{#1},#2)}
\newcommand{\pstar}[1][p]{{#1}^{*}}
\def\ctext#1{\expandafter\@ctext\csname c@#1\endcsname}
\def\@ctext#1{\ifcase#1\or 0\or 1\or 2\or
3\or 4\or 5\fi}
\AddEnumerateCounter{\ctext}{\@ctext}{0}
\def\cctext#1{\expandafter\@cctext\csname c@#1\endcsname}
\def\@cctext#1{\ifcase#1\or \textbf{b-amenability}\or \textbf{b-forbidden set}\or \textbf{b-blocking}\fi}
\AddEnumerateCounter{\cctext}{\@cctext}{b-amenability}
\def\btext#1{\expandafter\@btext\csname c@#1\endcsname}
\def\@btext#1{\ifcase#1\or B-i\or B-ii\or B-iii\fi}
\AddEnumerateCounter{\btext}{\@btext}{B-i}
\begin{document}

\maketitle

\begin{abstract}
We develop terminology and methods for working with
maximally oriented partially directed acyclic graphs (\mpdag{}s).
\Mpdag{}s arise from imposing restrictions on a Markov equivalence class of directed acyclic graphs, or equivalently on its graphical representation as a completed partially directed acyclic graph ($\CPDAG$), for example when adding background knowledge about certain edge orientations. Although \mpdag{}s often arise in practice, causal methods have been mostly developed for $\CPDAG$s. In this paper, we extend such methodology to \mpdag{}s. In particular, we develop methodology to read off possible ancestral relationships, we introduce a graphical criterion for covariate adjustment to estimate total causal effects, and we adapt the IDA and joint-IDA frameworks to estimate multi-sets of possible causal effects. We also present a simulation study that illustrates the gain in identifiability of total causal effects as the background knowledge increases. All methods are implemented in the \texttt{R} package \texttt{pcalg}.
\end{abstract}

\section{INTRODUCTION}  \label{sec:intro}

Directed acyclic graphs ($\DAG$s) are used for causal reasoning (e.g., \citealp{Pearl2009}), where directed edges represent direct causal effects. In general, it is impossible to learn a $\DAG$ from (observational) data. Instead, one can learn a completed partially directed acyclic graph ($\CPDAG$) (e.g., \citealp{spirtes2000causation, Chickering02-optimal}), which usually contains some undirected edges and represents a Markov equivalence class of $\DAG$s (see Section \ref{sec:prelim} for definitions). 

Maximally oriented partially directed acyclic graphs (\mpdag{}s) generally contain fewer undirected edges than their corresponding $\CPDAG$s and thus represent fewer $\DAG$s. They arise in various scenarios, for example when adding background knowledge about edge orientations to a $\CPDAG$ \citep{meek1995causal}, when imposing a partial ordering (tiers) of variables before conducting causal structure learning \citep{tetrad1998}, in causal structure  learning from both observational and interventional data \citep{hauserBuehlmann12,wang2017permutation}, or in structure learning under certain model restrictions \citep{hoyer08,ernestroth2016,eigenmann17}.

\Mpdag{}s
appear in the literature under various names, such as interventional essential graphs \citep{hauserBuehlmann12,wang2017permutation}, distribution equivalence patterns \citep{hoyer08}, aggregated $\PDAG$s \citep{eigenmann17}
or simply $\CPDAG$s with background knowledge \citep{meek1995causal}. We refer to them as \mpdag{}s in this paper.

\begin{figure}[!tb]
\tikzstyle{every edge}=[draw,>=stealth',->]
\newcommand\dagvariant[1]{\begin{tikzpicture}[xscale=.5,yscale=0.5]
\node (a) at (0,0) {};
\node (d) at (0,2) {};
\node (b) at (2,0) {};
\node (c) at (2,2) {};
\begin{scope}[gray]
\draw (a) edge [-] (b);
\draw (b) edge [-] (c);
\draw (d) edge [-] (c);
\draw (d) edge [-] (a);
\draw (d) edge [-] (b);
\end{scope}
\draw #1;
\end{tikzpicture}}

\centering
\begin{subfigure}{.4\columnwidth}
  \centering
\begin{tikzpicture}[->,>=latex,shorten >=1pt,auto,node distance=.8cm,scale=1,transform shape]
  \tikzstyle{state}=[inner sep=1pt, minimum size=12pt]

  % rule 1
  \node[state] (Xia) at (0,0) {$A$};
  \node[state] (Xka) at (0,2) {$D$};
  \node[state] (Xja) at (2,0) {$B$};
  \node[state] (Xsa) at (2,2) {$C$};

 \draw 		(Xka) edge [line width=1.1pt,-] (Xja);
 \draw 		(Xia) edge  [-] (Xka);
 \draw   	(Xka) edge [-] (Xsa);
 \draw    	(Xsa)  edge [-] (Xja);
 \draw    	(Xja) edge [-] (Xia);
\end{tikzpicture}
  \caption{}
  \label{cpdag11}
\end{subfigure}
\unskip
\vrule
\hspace{.5cm}
\begin{subfigure}{.50\columnwidth}
\dagvariant{
(a) edge [->] (b)
(b) edge [->] (c)
(d) edge [->] (c)
(d) edge [->] (a)
(d) edge [->] (b)
}
\dagvariant{
(b) edge [->] (a)
(b) edge [->] (c)
(d) edge [->] (c)
(d) edge [->] (a)
(d) edge [->] (b)
}
\dagvariant{
(a) edge [->] (b)
(b) edge [->] (c)
(d) edge [->] (c)
(a) edge [->] (d)
(d) edge [->] (b)
}
\dagvariant{
(b) edge [->] (a)
(c) edge [->] (b)
(d) edge [->] (c)
(d) edge [->] (a)
(d) edge [->] (b)
}
\dagvariant{
(b) edge [->] (a)
(c) edge [->] (b)
(c) edge [->] (d)
(d) edge [->] (a)
(d) edge [->] (b)
}
\dagvariant{
(b) edge [->] (a)
(c) edge [->] (b)
(c) edge [->] (d)
(d) edge [->] (a)
(b) edge [->] (d)
}
\dagvariant{
(b) edge [->] (a)
(b) edge [->,line width=1.3pt,color=blue] (c)
(c) edge [->,line width=1.3pt,color=blue] (d)
(d) edge [->] (a)
(b) edge [->] (d)
}
\dagvariant{
(b) edge [->, line width=1.3pt, color=blue] (a)
(b) edge [->] (c)
(d) edge [->] (c)
(a) edge [->, line width=1.3pt, color=blue] (d)
(b) edge [->] (d)
}
\dagvariant{
(b) edge [->] (a)
(b) edge [->] (c)
(d) edge [->] (c)
(d) edge [->] (a)
(b) edge [->] (d)
}
\dagvariant{
(a) edge [->] (b)
(b) edge [->] (c)
(d) edge [->] (c)
(a) edge [->] (d)
(b) edge [->] (d)
}
\caption{}
\label{alldagcpdag}
\end{subfigure}
\\
\begin{subfigure}{.4\columnwidth}
  \centering
\begin{tikzpicture}[->,>=latex,shorten >=1pt,auto,node distance=0.8cm,scale=1,transform shape]
  \tikzstyle{state}=[inner sep=1pt, minimum size=12pt]

  % rule 1
  \node[state] (Xia) at (0,0) {$A$};
  \node[state] (Xka) at (0,2) {$D$};
  \node[state] (Xja) at (2,0) {$B$};
  \node[state] (Xsa) at (2,2) {$C$};

  \draw (Xka) edge [->,line width=1.1pt] (Xja);
\draw 		(Xia) edge  [-] (Xka);
 \draw   	(Xka) edge [-] (Xsa);
 \draw    	(Xsa)  edge [-] (Xja);
 \draw    	(Xja) edge [-] (Xia);
\end{tikzpicture}
\caption{}
  \label{mpdag11}
\end{subfigure}
\unskip
\vrule
\hspace{0.5cm}
\begin{subfigure}{.5\columnwidth}
\dagvariant{
(a) edge [->] (b)
(b) edge [->] (c)
(d) edge [->] (c)
(d) edge [->] (a)
(d) edge [->] (b)
}
\dagvariant{
(b) edge [->] (a)
(b) edge [->] (c)
(d) edge [->] (c)
(d) edge [->] (a)
(d) edge [->] (b)
}
\dagvariant{
(a) edge [->] (b)
(b) edge [->] (c)
(d) edge [->] (c)
(a) edge [->] (d)
(d) edge [->] (b)
}
\dagvariant{
(b) edge [->] (a)
(c) edge [->] (b)
(d) edge [->] (c)
(d) edge [->] (a)
(d) edge [->] (b)
}
\dagvariant{
(b) edge [->] (a)
(c) edge [->] (b)
(c) edge [->] (d)
(d) edge [->] (a)
(d) edge [->] (b)
}
\caption{}
\label{alldagmpdag}
\end{subfigure}
\caption{(a) $\CPDAG$ $\g[C]$, (b) all $\DAG$s represented by $\g[C]$, (c) maximally oriented $\PDAG$ $\g$, (d) all $\DAG$s represented by $\g$.}
\label{fig:example_meekgraph11}
\end{figure}

\Mpdag{}s can represent more information about causal relationships than $\CPDAG$s.
However, this additional information has not been fully exploited in practice, since causal methods that are applicable to $\CPDAG$s are not directly applicable to general \mpdag{}s.

We now illustrate the difficulties and differences in working with \mpdag{}s, as opposed to
$\CPDAG$s.
Consider the $\CPDAG$ $\g[C]$ in Figure~\ref{cpdag11}. 
All $\DAG$s represented by $\g[C]$ (see Figure~\ref{alldagcpdag}) have the same adjacencies and unshielded colliders as $\g[C]$.
The undirected edge $A -B$ in $\g[C]$ means that there exists a $\DAG$ represented by $\g[C]$ that contains $A \rightarrow B$, as well as a $\DAG$ that contains $ A \leftarrow B$. Conversely, if $A \rightarrow B$ was in $\g[C]$, then $A \rightarrow B$ would be in all $\DAG$s represented by $\g[C]$.

The paths $B -D$, $B-C-D$ and $B-D$ are possibly directed paths in $\g[C]$, and hence $B$ is a possible ancestor of $D$ in $\g[C]$.
In Figure~\ref{alldagcpdag}, we see that there indeed exist $\DAG$s represented by $\g[C]$ that contain directed paths $B \rightarrow D$, $B \rightarrow C \rightarrow D$, or $B \rightarrow A \rightarrow D$, and $B$ is an ancestor of $D$ in half of the $\DAG$s represented by $\g[C]$.

Now suppose that we know from background knowledge that $D \rightarrow B$.
Adding $D \rightarrow B$ to $\g[C]$ results in the \mpdag{} $\g$ in Figure~\ref{mpdag11}. The $\DAG$s represented by $\g$, shown in Figure~\ref{alldagmpdag}, are exactly the first five $\DAG$s of Figure~\ref{alldagcpdag}. Again, all $\DAG$s represented by $\g$ have the same adjacencies and unshielded colliders as $\g$. Moreover, the interpretation of directed and undirected edges is the same as in the $\CPDAG$. 

In Figure~\ref{mpdag11}, paths $B - C - D$ and $B - A - D$ look like possibly directed paths from $B$ to $D$, so that one could think that $B$ is a possible ancestor of $D$.
However, due to $D \rightarrow B$ and the acyclicity of $\DAG$s, $B$ is not an ancestor of $D$ in any $\DAG$ represented by $\g$. Hence, we need a new way of reading off possible ancestral relationships.

An important difference between \mpdag{}s and $\CPDAG$s, is that the former can contain partially directed cycles while the latter cannot.
In Figure~\ref{mpdag11}, $D \rightarrow B -C-D$ and $D\rightarrow B -D$ are examples of such cycles. As a consequence, the important property of $\CPDAG$s which states that if $D \rightarrow B - C$ is in a $\CPDAG$ $\g[C]$, then $D \rightarrow C$ is also in $\g[C]$ \citep[Lemma 1 from][]{meek1995causal}, does not hold in \mpdag{}s.

Furthermore, let $\g[C]_{undir}$ denote the subgraph of a $\CPDAG$ $\g[C]$ that contains only the undirected edges from $\g[C]$. It is well known that $\g[C]_{undir}$ is triangulated (chordal) and that the connected components of $\g[C]_{undir}$ can be oriented into $\DAG$s without unshielded colliders, independently of each other and the other edges in $\g[C]$, to form all $\DAG$s represented by $\g[C]$ \citep[see the proof of Theorem 4 in][]{meek1995causal}. This result is not valid in \mpdag{}s. To see this, consider the undirected component $\g_{undir}$ of the \mpdag{} $\g$ in Figure~\ref{mpdag11}. Then $\g_{undir}$ is $A -B-C-D-A$ and is not triangulated. Orienting $B-C-D-A$ into a $\DAG$ always leads to at least one unshielded collider. This unshielded collider in $\g_{undir}$ must occur at $A$ or $C$, so that it is shielded in $\g$ by the edge $D\to B$. Hence, one can no longer orient $\g_{undir}$ independently of the directed edges. 

Previous work on \mpdag{}s deals with these issues in two ways, by considering special cases of \mpdag{}s that do not contain partially directed cycles and that retain the same path interpretations as $\CPDAG$s \citep{vanDerZander16}, or by exhaustively searching the space of $\DAG$s represented by the \mpdag{} to find a solution consistent with all these $\DAG$s \citep{hyttinen2015calculus}. 

In this paper, we develop methodology to work with general \mpdag{}s directly.
In \textbf{Section~\ref{sec:prelim}}, we introduce terminology and definitions. In \textbf{Section~\ref{sec:understanding}}, we define possible causal relationships in \mpdag{}s and construct a method to easily read these off from a given \mpdag{}.

In \textbf{Section~\ref{sec:estimate}}, we consider the problem of estimating (possible) total causal effects in \mpdag{}s.
In \textbf{Section~\ref{sec:adjust-mpdag}}, we give a necessary and sufficient graphical criterion for computing total causal effects in \mpdag{}s via covariate adjustment. Our criterion is called the b-adjustment criterion (`b' for background), and builds on \cite{shpitser2012validity,shpitser2012avalidity} and \cite{perkovic15_uai,perkovic16}. We also construct fast algorithms for finding all adjustment sets in \mpdag{}s, using results from \cite{vanconstructing}.
In \textbf{Section~\ref{sec:ida}}, we no longer focus on total causal effects that are identifiable via covariate adjustment, but rather consider computing all possible total causal effects of a node set $\mathbf{X}$ on a node $Y$ in a \mpdag{} $\g$, and collect all these values in a multi-set. Such an approach already exists for $\CPDAG$s under the name (joint-)IDA \citep{MaathuisKalischBuehlmann09,MaathuisColomboKalischBuehlmann10,nandy2014estimating}.  
We develop an efficient semi-local version of (joint-)IDA that is applicable to \mpdag{}s.

In \textbf{Section~\ref{sec:sim}}, we discuss our implementation and present a simulation study where we apply our b-adjustment criterion and semi-local IDA algorithm to $\CPDAG$s with various amounts of added background knowledge. We demonstrate that background knowledge can help considerably in identifying total causal effects via covariate adjustment and in reducing the number of unique values in estimated multi-sets of possible total causal effects. We close with a discussion in \textbf{Section~\ref{sec:disc}}. All proofs can be found in the supplementary material.

\section{PRELIMINARIES} \label{sec:prelim}

\textbf{Nodes, edges and subgraphs.} A graph $\g= (\vars,\e) $ consists of a set of nodes (variables) $ \vars=\left\lbrace X_{1},\dots,X_{p}\right\rbrace$ and a set of edges $ \e $. There is at most one edge between any two nodes, and two nodes are \textit{adjacent} if an edge connects them. 
We call $\rightarrow$ a directed and $-$ a undirected edge. An \textit{induced subgraph} $\g' =(\mathbf{V'}, \mathbf{E'})$ of $\g= (\vars,\e) $ consists of a subset of nodes $\mathbf{V'} \subseteq \mathbf{V}$ and edges $\mathbf{E'} \subseteq \mathbf{E}$ where $\mathbf{E'}$ are all edges in $\mathbf{E}$ between nodes in $\mathbf{V'}$.

\textbf{Paths.} A \textit{path} $p$ from $X$ to $Y$ in $\g$ is a sequence of distinct nodes $\langle X, \dots,Y \rangle$ in which every pair of successive nodes is adjacent. A node $V$ \emph{lies on a path} $p$ if $V$ occurs in the sequence of nodes. If $p = \langle X_1, X_2, \dots , X_k, \rangle, k \ge 2$, then $X_1$ and $X_k$ are \textit{endpoints} of $p$, and any other node $X_i, 1 <i<k,$ is a \textit{non-endpoint} node on $p$. A \textit{directed path} or \textit{causal path} from $X$ to $Y$ is a path from $X$ to $Y$ in which all edges are directed towards $Y$, that is $X \to \dots\to Y$. A \textit{possibly directed path} or \textit{possibly causal path} from $X$ to $Y$ is a path from $X$ to $Y$ that does not contain an edge directed towards $X$. A \textit{non-causal} path from $X$ to $Y$ contains at least one edge directed towards $X$. Throughout, we will refer to a path $p$ from $X$ to $Y$ as non-causal (causal) if and only if it is non-causal (causal) from $X$ to $Y$.
For two disjoint subsets $\mathbf{X}$ and $\mathbf{Y}$ of $\mathbf{V}$, a path from $\mathbf{X}$ to $\mathbf{Y}$
is a path from some $X \in \mathbf{X}$ to some $Y \in \mathbf{Y}$.
A path from $\mathbf{X}$ to $\mathbf{Y}$ is \textit{proper} (w.r.t. $\mathbf{X}$) if only its first node is in $\mathbf{X}$. If $\g$ and $\g^*$ are two graphs with identical adjacencies and $p$ is a path in $\g$, then the \textit{corresponding path} $\pstar$ in $\g^*$ consists of the same node sequence as $p$.

\textbf{Partially directed and directed cycles.}
A directed path from $X$ to $Y$, together with $Y\to X$, forms a \textit{directed cycle}.
A \textit{partially directed cycle} is formed by a possibly directed path from $X$ to $Y$, together with $Y \to X$. 

\textbf{Subsequences and subpaths.} A \textit{subsequence} of a path $p$ is obtained by deleting some nodes from $p$ without changing the order of the remaining nodes. A subsequence of a path is not necessarily a path. For a path $p = \langle X_1,X_2,\dots,X_m \rangle$, the \textit{subpath} from $X_i$ to $X_k$ ($1\le i\le k\le m)$ is the path $p(X_i,X_k) = \langle X_i,X_{i+1},\dots,X_{k}\rangle$.

\textbf{Ancestral relationships.} If $X\to Y$, then $X$ is a \textit{parent} of $Y$. If $X - Y$, then $X$ is a \textit{sibling} of $Y$. If there is a causal path from $X$ to $Y$, then $X$ is an \textit{ancestor} of $Y$, and $Y$ is a \textit{descendant} of $X$. We also use the convention that every node is a descendant and an ancestor of itself.
The sets of parents, siblings, ancestors and descendants of $X$ in~$\g$ are denoted by $\Pa(X,\g)$, $\Sib(X,\g)$, $\An(X,\g)$ and $\De(X,\g)$ respectively. For a set of nodes $\mathbf{X} \subseteq \mathbf{V}$, we let $\Pa(\mathbf{X},\g) = \cup_{X \in \mathbf{X}} \Pa(X,\g)$, with analogous definitions for $\Sib(\mathbf{X},\g)$, $\An(\mathbf{X},\g)$ and $\De(\mathbf{X},\g)$.

\textbf{Colliders, shields and definite status paths.} If a path $p$ contains $X_i \rightarrow X_j \leftarrow X_k$ as a subpath, then $X_j$ is a \textit{collider} on $p$. A path $\langle X_{i},X_{j},X_{k} \rangle$ is an \emph{(un)shielded triple} if $ X_{i} $ and $ X_{k}$ are (not) adjacent. A path is \textit{unshielded} if all successive triples on the path are unshielded. A node $X_{j}$ is a \textit{definite non-collider} on a path $p$ if there is at least one edge out of $X_{j}$ on $p$, or if $X_{i} - X_j - X_k$ is a subpath of $p$ and $\langle X_i,X_j,X_k\rangle$ is an unshielded triple. A node is of \textit{definite status} on a path if it is a collider, a definite non-collider or an endpoint on the path. A path $p$ is of definite status if every node on $p$ is of definite status.

\textbf{D-connection and blocking.} A definite status path \textit{p} from $X$ to $Y$ is \textit{d-connecting} given a node set $\mathbf{Z}$ ($X,Y \notin \mathbf{Z}$) if every definite non-collider on $p$ is not in $\mathbf{Z}$, and every collider on $p$ has a descendant in $\mathbf{Z}$. Otherwise, $\mathbf{Z}$ blocks $p$.

\textbf{DAGs, PDAGs and CPDAGs.}
A \textit{directed graph} contains only directed edges.
A \textit{partially directed graph} may contain both directed and undirected edges.
A directed graph without directed cycles is a \textit{directed acyclic graph $(\DAG)$}.  A \textit{partially directed acyclic graph $(\PDAG)$} is a partially directed graph without directed cycles.

Two disjoint node sets $\mathbf{X}$ and $\mathbf{Y}$ in a $\DAG$ $\g[D]$ are \textit{d-separated} given a node set $\mathbf{Z}$ (pairwise disjoint with $\mathbf{X}$ and $\mathbf{Y}$) if and only if every path from $\mathbf{X}$ to $\mathbf{Y}$ is blocked by $\mathbf{Z}$. Several $\DAG$s can encode the same d-separation relationships. Such $\DAG$s form a \emph{Markov equivalence class} which is uniquely represented  by a \emph{completed partially directed acyclic graph $(\CPDAG)$} \citep{meek1995causal}.
A directed edge $X \rightarrow Y$ in a $\CPDAG$ $\g[C]$ corresponds to $X \rightarrow Y$ in every $\DAG$ in the Markov equivalence class represented by $\g[C]$.
For any undirected edge $X - Y$ in a $\CPDAG$ $\g[C]$, the Markov equivalence class represented by $\g[C]$ contains a $\DAG$ with $X \rightarrow Y$ and a $\DAG$ with $X \leftarrow Y$.

A $\PDAG$ $\g'$ is \textit{represented} by another $\PDAG$ $\g$ (equivalently $\g$ represents $\g'$) if $\g'$ and $\g$ have the same adjacencies and unshielded colliders and every directed edge $X \rightarrow Y$ in $\g$ is also in $\g'$.

\textbf{Maximal PDAGs.} A $\PDAG$ $\g$ is a \textit{maximally oriented} $\PDAG$ if and only if the edge orientations in $\g$ are closed under the orientation rules in Figure~\ref{fig:orientationRules}. Throughout, we will refer to maximally oriented $\PDAG$s as \textit{maximal PDAGs}.

Let $\mathbf{R}$ be a set of required directed edges representing background knowledge. Algorithm~\ref{algo} of \cite{meek1995causal} describes how to incorporate background knowledge $\mathbf{R}$ in a \mpdag{} $\g$. If Algorithm~\ref{algo} does not return a FAIL, then it returns a new \mpdag{} $\g'$ that is represented by $\g$. Background knowledge $\mathbf{R}$ is \textit{consistent} with \mpdag{} $\g$ if and only if Algorithm~\ref{algo} does not return a FAIL \citep{meek1995causal}.
\unskip
\begin{algorithm}[!t]
 \TitleOfAlgo{$\ConstructMaxPDAG$}
 \KwData{\mpdag{} $\g$, background knowledge $\mathbf{R}$}
 \KwResult{\mpdag{} $\g'$ or FAIL}
 Let $\g' = \g$\;
 \While{$\mathbf{R}\neq \emptyset$}{
  Choose an edge $\{X \rightarrow Y\}$ in $\mathbf{R}$ \;
  $\mathbf{R} = \mathbf{R} \setminus \{X \rightarrow Y\}$ \;
  \eIf{$\{X - Y\}$ or $\{X\rightarrow Y\}$ is in $\g'$}{
   Orient $\{X \rightarrow Y\}$ in $\g'$\;
   Close the edge orientations under the rules in Figure~\ref{fig:orientationRules} in $\g'$\;
   }{
   FAIL\;
  }
 }
 \label{algo}
\end{algorithm}

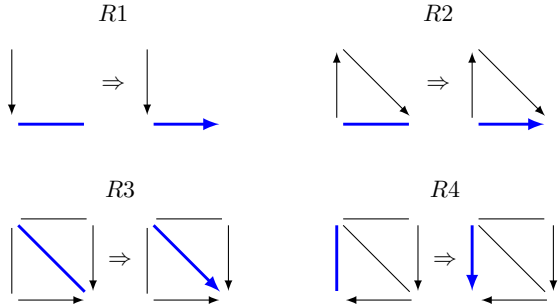
\begin{figure}[!tb]
\vspace{-.2cm}
\centering
\begin{tikzpicture}[->,>=latex,shorten >=1pt,auto,node distance=1.2cm,scale=.9,transform shape]
  \tikzstyle{state}=[inner sep=0.5pt, minimum size=5pt]

  % rule 1
  \node[state] (Xia) at (0,0) {};
  \node[state] (Xka) at (0,1.2) {};
  \node[state] (Xja) at (1.2,0) {};

  \path (Xka) edge (Xia);
 \draw[-,line width=1.1pt,blue]
          (Xia) edge (Xja);

  \coordinate [label=above:$R1$] (L1) at (1.5,1.4);
  \coordinate [label=above:$\Rightarrow$] (L2) at (1.5,0.4);

  \node[state] (Xib) at (2,0) {};
  \node[state] (Xkb) at (2,1.2) {};
  \node[state] (Xjb) at (3.2,0) {};

  \path (Xkb) edge (Xib);
   \draw[->,line width=1.1pt,blue]   (Xib) edge (Xjb);

  % rule 2
  \node[state] (Xic) at (4.8,0) {};
  \node[state] (Xkc) at (4.8,1.2) {};
  \node[state] (Xjc) at (6,0) {};

  \path (Xic) edge (Xkc)
          (Xkc) edge (Xjc);
   \draw[-,line width=1.1pt,blue]
          (Xic) edge (Xjc);

  \coordinate [label=above:$R2$] (L2) at (6.3,1.4);
  \coordinate [label=above:$\Rightarrow$] (L2) at (6.3,0.4);

  \node[state] (Xid)  at (6.8,0) {};
  \node[state] (Xkd)  at (6.8,1.2) {};
  \node[state] (Xjd)  at (8,0) {};

  \path (Xid) edge (Xkd)
          (Xkd) edge (Xjd);
   \draw[->,line width=1.1pt,blue]  (Xid) edge (Xjd);

  % rule 3
  \node[state] (Xie) at (0,-1.4) {};
  \node[state] (Xke) at (1.2,-1.4) {};
  \node[state] (Xle) at (0,-2.6) {};
  \node[state] (Xje) at (1.2,-2.6) {};

  \path (Xke) edge (Xje)
          (Xle) edge (Xje);

  \draw[-,line width=1.1pt,blue]  (Xie) edge (Xje);
  \path[-]
          (Xke) edge (Xie)
          (Xle) edge (Xie);

  \coordinate [label=above:$R3$] (L3) at (1.6,-1.2);
    \coordinate [label=above:$\Rightarrow$] (L3) at (1.6,-2.2);

  \node[state] (Xif) at (2,-1.4) {};
  \node[state] (Xkf) at (3.2,-1.4) {};
  \node[state] (Xlf) at (2,-2.6) {};
  \node[state] (Xjf) at (3.2,-2.6) {};

  \draw[->,line width=1.1pt,blue]  (Xif) edge (Xjf);

  \path (Xkf) edge (Xjf)
          (Xlf) edge (Xjf);
  \path[-]
          (Xkf) edge (Xif)
          (Xlf) edge (Xif);

  % rule 4
  \node[state] (Xig) at (4.8,-1.4) {};
  \node[state] (Xjg) at (6,-1.4) {};
  \node[state] (Xkg) at (4.8,-2.6) {};
  \node[state] (Xlg) at (6,-2.6) {};

  \path (Xlg) edge (Xkg)
          (Xjg) edge (Xlg);
  \draw[line width=1.1pt,blue,-]
           (Xig) edge (Xkg);
  \path[-]
          (Xig) edge (Xjg)
          (Xig) edge (Xlg);

  \coordinate [label=above:$R4$] (L4) at (6.4,-1.2);
  \coordinate [label=above:$\Rightarrow$] (L4) at (6.4,-2.2);

  \node[state] (Xih) at (6.8,-1.4) {};
  \node[state] (Xjh) at (8,-1.4) {};
  \node[state] (Xkh) at (6.8,-2.6) {};
  \node[state] (Xlh) at (8,-2.6) {};

  \draw[blue,line width=1.1pt,->]  (Xih) edge (Xkh);
  \path (Xlh) edge (Xkh)
        (Xjh) edge (Xlh);
  \path[-]
          (Xih) edge (Xjh)
           (Xih) edge (Xlh);

\end{tikzpicture}
\caption{The orientation rules from~\cite{meek1995causal}. If the graph on the left-hand side of a rule is an induced subgraph of a $\PDAG$ $\g$, then \textit{orient} the blue undirected edge ({\color{blue} {\bfseries $-$}}) as shown on the right-hand side of the rule.}
\label{fig:orientationRules}
\end{figure}

\textbf{$\g$ and $[\g]$.}
If $\g$ is a \mpdag{}, then $[\g]$ denotes every \mpdag{} represented by $\g$. Thus, $[\g]$ contains all $\DAG$s represented by $\g$, but also all $\PDAG$s (with the same adjacencies and unshielded colliders) that contain more orientations than $\g$. 

\textbf{Causal DAGs and PDAGs.} A density $f$ of $\mathbf{V} = \{X_1,\dots,X_p\}$ is \textit{consistent} with a $\DAG$ $\g[D] =(\mathbf{V},\mathbf{E})$ if it factorizes as $f(\vars)= \prod_{i=1}^{p}f(X_{i}|Pa(X_{i},\g))$ \citep{Pearl2009}. A $\DAG$ is \textit{causal} if every edge $X_{i} \rightarrow X_{j}$ in $\g[D]$ represents a direct causal effect of $X_{i}$ on $X_{j}$ (wrt $\mathbf{V}$). A $\PDAG$ is \textit{causal} if it represents a causal $\DAG$.

We consider interventions $do(\mathbf{X} =\mathbf{x})$ (for $\mathbf{X}\subseteq \mathbf{V}$) or $do(\mathbf{x})$ for shorthand, which represent outside interventions that set $\mathbf{X}$ to $\mathbf{x}$ \citep{Pearl2009}. A density $f$  of $\mathbf{V} = \{X_1,\dots,X_p\}$ is \textit{consistent with a causal DAG} $\g[D] =(\mathbf{V},\mathbf{E})$ if all post-intervention densities $f(\mathbf{v}|do(\mathbf{x}))$ factorize as:
\begin{multline}
f(\mathbf{v}|do(\mathbf{x})) =  \\
=
\begin{cases}
\prod_{X_{i} \in \vars \setminus \mathbf{X}}f(x_{i}|\Pa(x_{i},\g[D])), &  \text{if }\mathbf{X} =\mathbf{x}, \\
0, & \text{otherwise.}
\end{cases}
\label{eq11}
\end{multline}
Equation \eqref{eq11} is known as the truncated factorization formula \citep{Pearl2009}, manipulated density formula \citep{spirtes2000causation} or the g-formula \citep{robins1986new}.
A density $f$ is \textit{consistent with a causal PDAG} $\g$ if it is consistent with a causal $\DAG$ in $[\g]$.
\section{POSSIBLY CAUSAL RELATIONSHIPS IN \MPDAG{}s} \label{sec:understanding}
A basic task for causal reasoning based on a \mpdag{} $\g$ is reading off possible causal relationships between nodes directly from $\g$. The alternative is of course to list all $\DAG$s in $[\g]$ and to read off the causal relationship in each $\DAG$. If $\g$ has many undirected edges, this task quickly becomes a computational burden. We show that possible causal relationships can be read off from the \mpdag{} directly if we modify the definitions of possibly causal and non-causal paths.

In Definition~\ref{def:posdir-path}, we define a \textit{b-possibly causal} path and its complement the \textit{b-non-causal} path for \mpdag{}s. Throughout, we use the prefix \textit{b-} (for background) to distinguish from the analogous terms for $\CPDAG$s. Lemma~\ref{lemma:posdir-path} demonstrates the soundness of Definition~\ref{def:posdir-path}.
\begin{definition}(\textbf{b-possibly causal path, b-non-causal path})
Let $p = \langle X=V_0, \dots , V_k=Y \rangle$, $k \geq 1$ be a path from node $X$ to node $Y$ in a \mpdag{} $\g$. We say that $p$ is b-possibly causal in $\g$ if and only if no edge $V_{i} \leftarrow V_{j}, 0 \le i < j \le k$ is in $\g$. Otherwise, we say that $p$ is b-non-causal path in $\g$.
\label{def:posdir-path}
\end{definition}
\begin{remark} Note that Definition~\ref{def:posdir-path} involves edges that are not on $p$.
\end{remark}
\begin{lemma}
Let $\pstar$ be a path from $X$ to $Y$ in a \mpdag{} $\g$. If $\pstar$ is b-non-causal in $\g$, then for every $\DAG$ $\g[D]$ in $[\g]$  the corresponding path in $\g[D]$ is non-causal. By contraposition, if $p$ is a causal path in at least one $\DAG$ $\g[D]$ in $[\g]$, then the corresponding path in $\g$ is b-possibly causal.
\label{lemma:posdir-path}
\end{lemma}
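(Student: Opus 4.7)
The plan is to exploit two facts: (i) every directed edge of the MPDAG $\mathcal{G}$ is preserved in every DAG $\mathcal{D} \in [\mathcal{G}]$ (by definition of representation), and (ii) DAGs are acyclic. Together these let us lift the defining backward edge of b-non-causality into every $\mathcal{D}$ and then force a backward-pointing edge onto the corresponding path itself.

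Concretely, write $p^* = \langle X = V_0, V_1, \dots, V_k = Y\rangle$. Since $p^*$ is b-non-causal in $\mathcal{G}$, Definition~\ref{def:posdir-path} supplies indices $0 \le i < j \le k$ with $V_i \leftarrow V_j$ in $\mathcal{G}$. Fix an arbitrary DAG $\mathcal{D} \in [\mathcal{G}]$. Because $\mathcal{D}$ is represented by $\mathcal{G}$, the directed edge $V_j \rightarrow V_i$ is present in $\mathcal{D}$, and because $\mathcal{G}$ and $\mathcal{D}$ have the same adjacencies, the node sequence of $p^*$ is still a valid path $p$ in $\mathcal{D}$.

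Now examine the subpath $p(V_i,V_j) = \langle V_i, V_{i+1}, \dots, V_j\rangle$ of $p$ inside $\mathcal{D}$. Suppose, for contradiction, that every successive edge on this subpath is oriented forward, i.e.\ $V_a \rightarrow V_{a+1}$ for each $i \le a < j$. Then $p(V_i,V_j)$ is a directed path from $V_i$ to $V_j$ in $\mathcal{D}$; concatenating it with the edge $V_j \rightarrow V_i$ yields a directed cycle, contradicting the acyclicity of $\mathcal{D}$. Hence there must exist some $a$ with $i \le a < j$ such that the edge between $V_a$ and $V_{a+1}$ in $\mathcal{D}$ is oriented as $V_a \leftarrow V_{a+1}$. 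This edge lies on $p$ and is directed towards $X$, so $p$ is non-causal in $\mathcal{D}$. The contrapositive statement follows at once.

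The main subtlety to keep in mind, rather than a deep obstacle, is that the edge $V_i \leftarrow V_j$ certifying b-non-causality need not itself be an edge of $p^*$ (this is exactly the content of the Remark following Definition~\ref{def:posdir-path}); the acyclicity argument is what converts this off-path edge into an on-path reversal. Nothing else is needed: the proof uses only the definition of representation for MPDAGs and the acyclicity of DAGs in $[\mathcal{G}]$.
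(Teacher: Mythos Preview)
Your proof is correct and follows essentially the same route as the paper: both use that the directed edge $V_j \to V_i$ persists in every $\mathcal{D}\in[\mathcal{G}]$, and then conclude that the subpath $p(V_i,V_j)$ in $\mathcal{D}$ must contain a backward edge. The paper states this last step in one line (``$p(V_i,V_j)$ is non-causal from $V_i$ to $V_j$ in $\mathcal{D}$''), leaving the acyclicity argument implicit, whereas you spell it out explicitly; but the underlying idea is identical.
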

\noindent{}Note that since every $\CPDAG$ is also a \mpdag{}, all results in this paper subsume existing results for $\CPDAG$s.
We now define \textit{b-possible descendants} (\textit{b-possible ancestors}) in a \mpdag{}.
\begin{definition}$(\bPossDe(\mathbf{X},\g) \text{ and }\bPossAn(\mathbf{X},\g))$
Let $\textbf{X}$ be a node set in a \mpdag{} $\g$. Then $W$ is a b-possible descendant (b-possible ancestor) of $\mathbf{X}$ in $\g$, and we write $W \in \bPossDe(\mathbf{X},\g)$ $(W \in \bPossAn(\mathbf{X},\g))$ if and only if $W \in \mathbf{X}$ or there is a node $X \in \mathbf{X}$ and a b-possibly causal path from $X$ to $W$ ($W$ to $X$) in $\g$.
\label{def:posde-1}
\end{definition}

\begin{example}
We use this example to illustrate b-possibly causal paths and b-possible descendants.
Consider $\CPDAG$ $\g[C]$ in Figure~\ref{cpdag11} and \mpdag{} $\g$ in Figure~\ref{mpdag11}.
We see that $B - C - D$ is a b-possibly causal path in $\g[C]$, and a b-non-causal path in $\g$ due to $D \rightarrow B$ in $\g$.
Conversely, $D-C-B$ is a b-possibly causal path in both $\g[C]$ and $\g$.

Furthermore, $\bPossDe(B,\g[C]) = \{ A,B, C,D\}$ and $\bPossDe(B, \g) = \{A,B,C \}$.
The b-possible descendants of $A,C$ and $D$ nodes are the same in $\g$ and $\g[C]$.
\label{ex:allpossdec}
\end{example}
\subsection{EFFICIENTLY FINDING ALL B-POSSIBLE DESCENDANTS/ANCESTORS}
The b-possibly causal paths present an elegant extension of the notion of possibly causal paths from $\CPDAG$s to \mpdag{}s. Nevertheless, finding all b-possible descendants by checking the b-possibly causal status of a path is non trivial, since it involves considering many edges not on the path. This is cumbersome if the graphs we are dealing with are large and/or dense.  

To solve this issue, we use Lemma~\ref{lemma:unshielded-analog} (analogous to \citealp[Lemma~B.1 in][]{zhang2008completeness}) to only consider paths which are unshielded and hence, of definite status. We then only need consider the edges which are on the definite status paths (Lemma~\ref{lemma:def-stat-posdir}).
Thus, the task of finding all b-possible descendants (ancestors) of $\mathbf{X}$ in a \mpdag{} $\g = (\mathbf{V},\mathbf{E})$ can be done using a depth first search algorithm with computational complexity $O(|\mathbf{V}| + |\mathbf{E}|)$.
\begin{lemma}
Let $\pstar = \langle V_1, \dots , V_k \rangle$ be a definite status path in a \mpdag{} $\g$. Then $\pstar$ is b-possibly causal if and only if there is no $V_i \leftarrow V_{i+1}$, for $i \in \{1,\dots, k-1\}$  in $\g$.
\label{lemma:def-stat-posdir}
\end{lemma}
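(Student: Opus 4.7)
The forward direction is immediate from Definition~\ref{def:posdir-path}, which forbids every edge $V_i \leftarrow V_j$ with $i<j$ in a b-possibly causal path, the consecutive ones in particular. For the reverse direction I would argue by strong induction on the number of nodes $k$ of $\pstar$, combined with a minimal counterexample argument. The base case $k = 2$ is trivial, since the only backward pair to rule out is the consecutive one, which is excluded by hypothesis.

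For the inductive step I would suppose $\pstar = \langle V_1, \dots, V_k \rangle$ is a definite status path with no consecutive backward edge and, for contradiction, that some edge $V_i \leftarrow V_j$ with $j - i \ge 2$ is in $\g$. The sub-paths $\langle V_1, \dots, V_{k-1}\rangle$ and $\langle V_2, \dots, V_k\rangle$ are both definite status (sub-paths of definite status paths inherit this property) and satisfy the no-consecutive-backward hypothesis, so applying the induction hypothesis to each rules out every backward pair lying entirely within $\{1,\dots,k-1\}$ or within $\{2,\dots,k\}$. The only surviving pair is $(1,k)$, so the only backward edge in $\g$ between nodes of $\pstar$ is $V_k \to V_1$. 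Furthermore, every interior node $V_l$ ($1<l<k$) is a definite non-collider on $\pstar$, since a collider would require the excluded consecutive backward edge $V_l \leftarrow V_{l+1}$.

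I would then split on the orientation of the edge $V_1 V_2$. If $V_1 \to V_2$, I propagate along the path: given $V_{l-1} \to V_l$, the definite non-collider $V_l$ cannot satisfy the ``unshielded undirected triple'' condition (which requires both path-edges at $V_l$ to be undirected), so it must have an outgoing edge on the path, forcing $V_l \to V_{l+1}$. Chaining yields $V_1 \to V_2 \to \cdots \to V_k$, which together with $V_k \to V_1$ is a directed cycle, contradicting the PDAG axiom. If instead $V_1 - V_2$, rule R1 applied to $V_k \to V_1 - V_2$ forces $V_k$ and $V_2$ to be adjacent (otherwise R1 would orient $V_1 \to V_2$, breaking maximality); the minimality of $(1,k)$ among backward pairs rules out $V_k \to V_2$, and R2 applied to $V_2 \to V_k \to V_1$ with $V_1 - V_2$ would orient $V_2 \to V_1$, ruling out $V_2 \to V_k$, so $V_2 - V_k$ must be in $\g$. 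I then iterate the same analysis along the suffix $\langle V_2, \dots, V_k\rangle$: each step either encounters the first directed path-edge $V_m \to V_{m+1}$ and reduces to the propagation-and-cycle argument of the previous case, or propagates an unshielded-triple constraint between $V_{l-1}$ and $V_{l+1}$ forced by the definite non-collider condition.

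The step I expect to be the main obstacle is closing the $V_1 - V_2$ case when the undirected prefix along $\pstar$ has length at least $2$, because R2 does not directly collapse multi-hop undirected segments. My planned resolution is to appeal to the completeness of Meek's orientation rules for maximal PDAGs: an edge $V_1 - V_2$ left undirected in $\g$ must admit both orientations in some DAGs in $[\g]$, so the directed suffix $V_m \to \cdots \to V_k \to V_1$ produced by the propagation step, combined with an extension orienting $V_1 \to V_2$, would form a directed cycle in that DAG, the required contradiction.
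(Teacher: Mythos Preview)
Your proposal eventually lands on the right idea, but by a far more circuitous route than necessary, and one of your intermediate steps does not stand on its own. The paper's proof is essentially your ``planned resolution'' applied from the very start, with no induction and no case split: since no $V_i \leftarrow V_{i+1}$ occurs, every interior node of $\pstar$ is a definite non-collider; pick any $\DAG$ $\g[D]\in[\g]$ containing $V_1 \to V_2$ (such a $\DAG$ exists whether the first edge of $\pstar$ is $V_1\to V_2$ or $V_1 - V_2$); definite non-colliders in $\g$ remain non-colliders in $\g[D]$, so the corresponding path $p$ is causal, $V_1 \to \cdots \to V_k$; the assumed backward edge $V_j \leftarrow V_r$ then closes a directed cycle in $\g[D]$. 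That is the whole argument.

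Your induction to reduce to the single backward pair $(1,k)$ and your Case~1 (propagation inside $\g$ when $V_1\to V_2$) are correct but buy nothing once the $\DAG$ argument is available. Your Case~2, however, has a genuine gap before the resolution kicks in. When you say that encountering the first directed path-edge $V_m\to V_{m+1}$ ``reduces to the propagation-and-cycle argument of the previous case'', it does not: propagation from $V_m$ in $\g$ yields only $V_m\to\cdots\to V_k$, and together with $V_k\to V_1$ this is \emph{not} a directed cycle in $\g$, because the prefix $V_1 - \cdots - V_m$ is still undirected and \mpdag{}s are allowed to contain partially directed cycles. Your R1/R2 manipulations do not close this gap either. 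Only your final paragraph, passing to a $\DAG$ with $V_1\to V_2$, rescues the argument---and at that point the entire path becomes directed in the $\DAG$ (not just the suffix), which is what actually produces the cycle. So the cleanest fix is to drop the induction, the case split, and the Meek-rule gymnastics entirely and argue directly in a $\DAG$ from the outset, exactly as the paper does.
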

\begin{lemma}
 Let $X$ and $Y$ be distinct nodes in a \mpdag{} $\g$. If $p$ is s a b-possibly causal path from $X$ to $Y$ in $\g$, then a subsequence $\pstar$ of $p$ forms a b-possibly causal unshielded path from $X$ to $Y$ in~$\g$.
\label{lemma:unshielded-analog}
\end{lemma}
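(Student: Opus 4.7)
The plan is to choose $\pstar$ to be a shortest subsequence of $p$ that still constitutes a path from $X$ to $Y$ in $\g$, and to then verify that such a minimum length subsequence is automatically unshielded and b-possibly causal. Let $\mathcal{S}$ denote the collection of subsequences of $p$ that are paths from $X$ to $Y$ in $\g$ (i.e., sequences of distinct nodes from $p$, preserving order, with consecutive nodes adjacent in $\g$). Since $p \in \mathcal{S}$, this set is non-empty, so a shortest element $\pstar = \langle X = V'_0, V'_1, \dots, V'_m = Y \rangle$ exists, where each $V'_j = V_{i_j}$ for some indices $0 = i_0 < i_1 < \cdots < i_m = k$ on $p$.

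First I would show that $\pstar$ is unshielded. Suppose for contradiction that some triple $\langle V'_{j-1}, V'_j, V'_{j+1} \rangle$ on $\pstar$ is shielded, so that $V'_{j-1}$ and $V'_{j+1}$ are adjacent in $\g$. Then deleting $V'_j$ from $\pstar$ yields a strictly shorter subsequence of $p$ whose consecutive nodes remain adjacent in $\g$; this is a path from $X$ to $Y$ and lies in $\mathcal{S}$, contradicting the minimality of $\pstar$. Therefore every consecutive triple on $\pstar$ is unshielded, which is the definition of an unshielded path.

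Next I would verify that $\pstar$ is b-possibly causal. The key observation is that $\pstar$ preserves the relative order of the nodes as they appear on $p$: for any pair $V'_a, V'_b$ on $\pstar$ with $a < b$, the corresponding indices on $p$ satisfy $i_a < i_b$. By Definition~\ref{def:posdir-path} applied to $p$, the mpdag $\g$ contains no edge $V_{i_a} \leftarrow V_{i_b}$, which is exactly the condition that there is no edge $V'_a \leftarrow V'_b$ with $a < b$. Hence $\pstar$ satisfies Definition~\ref{def:posdir-path} and is b-possibly causal, completing the argument.

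The main subtlety, and what makes this almost a one-line proof once understood, is that Definition~\ref{def:posdir-path} quantifies over \emph{all} pairs of nodes on the path, not merely consecutive ones. Because of this ``global'' phrasing, the b-possibly causal property is automatically inherited by any order-preserving subsequence; no further work is required. The only real content is the minimality-based shortcutting argument used to eliminate shielded triples, which is entirely analogous to the classical argument of Zhang reducing d-connecting paths to unshielded ones.
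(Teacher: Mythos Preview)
Your proof is correct and takes essentially the same approach as the paper: both arguments shortcut shielded triples and exploit the fact that Definition~\ref{def:posdir-path} quantifies over all ordered pairs, so b-possibly causal is inherited by any order-preserving subsequence. The only cosmetic difference is that the paper formalizes the shortcutting via induction on path length, while you use an equivalent extremal (shortest-subsequence) argument.
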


\section{ESTIMATING TOTAL CAUSAL EFFECTS WITH \MPDAG{}S} \label{sec:estimate}
After inferring the existence of a possibly non-zero total causal effect of $\mathbf{X}$ on $\mathbf{Y}$ in a \mpdag{} $\g$ by finding a b-possible causal path from $X$ to $Y$, the natural question to ask is, how big is this effect? How do we calculate this effect using observational data?
Throughout, let $\g=(\vars,\e)$ represent a causal \mpdag{}, and let $\mathbf{X}$, $\mathbf{Y}$ and $\mathbf{Z}$ be pairwise disjoint subsets of $\vars$.
We are interested in the total causal effect of $\mathbf{X}$ on $\mathbf{Y}$.

\subsection{ADJUSTMENT IN \MPDAG{}s} \label{sec:adjust-mpdag}
The most commonly used tool for estimating total causal effects from observational data is covariate adjustment.
We first define the concept of a adjustment sets for \mpdag{}s.
\begin{definition}{(\textbf{Adjustment set})}
   Let $\mathbf{X,Y}$ and $\mathbf{Z}$ be pairwise disjoint node sets in a causal \mpdag{} $\g$. Then $\mathbf{Z}$ is an adjustment set relative to $(\mathbf{X,Y})$ in~$\g$ if for any density $f$ consistent with $\g$ we have
   \begin{equation}
   f(\mathbf{y}|do(\mathbf{x}))=
   \begin{cases}
   f(\mathbf{y}|\mathbf{x}) & \text{if }\mathbf{Z} = \emptyset,\\
   \int_{\mathbf{z}}f(\mathbf{y}|\mathbf{x,z})f(\mathbf{z})d\mathbf{z}  & \text{otherwise.}
   \end{cases}
   \nonumber
   \end{equation}
   \label{defadjustmentmpdag}
\end{definition}\vspace{-0.3cm}
\noindent{}Thus, if $\mathbf{Z}$ is an adjustment set relative to $(\mathbf{X,Y})$ in $\g$, we do not need to find the true underlying causal $\DAG$ in order to find the post-intervention density $f(\mathbf{y}|do(\mathbf{x}))$.

Under the assumption that the density consistent with the underlying causal $\DAG$ can be generated by a linear structural equation model (SEM) with additive noise \citep[see Ch.5 in][for definition of SEMs]{Pearl2009}, covariate adjustment allows the researcher to estimate the total causal effect by performing one multiple linear regression \citep[see][for the non-Gaussian noise result]{nandy2014estimating}. In this setting, if $\mathbf{Z}$ is an adjustment set relative to some nodes $X$, $Y$ in $\g$, then the coefficient of $X$ in the linear regression of $Y$ on $X$ and $\mathbf{Z}$ is the total causal effect of $X$ on $Y$.

There has been recent work on finding graphical criteria for sets that satisfy Definition~\ref{defadjustmentmpdag} relative to some $(\mathbf{X},\mathbf{Y})$. The first such criterion was the back-door criterion \citep{Pearl2009}, which is sound but not complete for adjustment (wrt Definition~\ref{defadjustmentmpdag}). The sound and complete adjustment criterion for $\DAG$s was introduced in \cite{shpitser2012validity,shpitser2012avalidity} and extended to more general graph classes in \cite{perkovic15_uai,perkovic16}. Furthermore, \cite{vanDerZander16} presented an adjustment criterion that can be used in \mpdag{}s that do not contain partially directed cycles. None of these criteria is applicable to general \mpdag{}s.

We present our b-adjustment criterion in Definition~\ref{def:gac-pdag}, building on the results of \cite{shpitser2012validity,shpitser2012avalidity} and \cite{perkovic15_uai,perkovic16}. Our criterion, as well as other results presented in this section, is phrased in the same way as the results in \cite{perkovic15_uai,perkovic16}. The only difference is the use of b-possibly causal paths as opposed to possibly causal paths. This similarity is intentional, as it makes our results easier to follow and further demonstrates that the results for $\DAG$s and $\CPDAG$s can be leveraged for \mpdag{}s. The proofs do not follow from previous results and require special consideration, especially due to the partially directed cycles that can occur in \mpdag{}s. In $\DAG$s and $\CPDAG$s our b-adjustment criterion reduces to the adjustment criteria from \cite{shpitser2012validity,shpitser2012avalidity} and \cite{perkovic15_uai,perkovic16}. For consistency however, we will refer to the b-adjustment criterion for all graph types.

To define our b-adjustment criterion, we first introduce the concept of the b-forbidden set for \mpdag{}s. The b-forbidden set contains all nodes that cannot be used for adjustment.
\begin{definition}$(\bfb{\g})$ Let $\mathbf{X}$ and $\mathbf{Y}$ be disjoint node sets in a \mpdag{} $\g$. We define the b-forbidden set relative to $(\mathbf{X,Y})$ as:
\begin{align}
   \bfb{\g} = \{ & W' \in \vars: W' \in \bPossDe(W,\g), \notag\\
   &\text{for some } W \notin \mathbf{X} \, \text{which lies on a} \notag \\
   &  \text{ proper b-possibly causal path } \notag\\
   & \text{from } \mathbf{X} \,\text{to}\, \mathbf{Y}\text{in } \g\}.\notag
\end{align} \label{def:forbidden nodes}
\end{definition}
\vspace{-.4cm}
\begin{definition}{(\textbf{b-adjustment criterion})}
   Let $\mathbf{X,Y}$ and $\mathbf{Z}$ be pairwise disjoint node sets in a \mpdag{} $\g$. Then $\mathbf{Z}$ satisfies the b-adjustment criterion relative to $(\mathbf{X,Y})$ in~$\g$ if:\vspace{-0.4cm}
   \begin{enumerate}[label = (\cctext*), leftmargin=0.5cm,align=left]
   \item\label{cond0} all proper b-possibly causal paths from $\mathbf{X}$ to $\mathbf{Y}$ start with a directed edge out of $\mathbf{X}$ in $\g$,
   \item\label{cond1} $\mathbf{Z} \cap \bfb{\g} = \emptyset$,
   \item\label{cond2} all proper b-non-causal definite status paths from $\mathbf{X}$ to $\mathbf{Y}$ are blocked by $\mathbf{Z}$~in~$\g$.
   \end{enumerate}  \label{def:gac-pdag}
\end{definition}
In Theorem~\ref{theorem:gac-pdag} we show that the b-adjustment criterion is sound and complete for adjustment.
Furthermore, in Theorem~\ref{theorem:constructive-set-mpdag} we show that there exists an adjustment set if and only if the specific set $\badjustb{\g}$ (Definition~\ref{def:constr-set-mpdag}) is an adjustment set relative to $(\mathbf{X,Y})$ in $\g$. 
\begin{theorem}
     Let $\mathbf{X,Y}$ and $\mathbf{Z}$ be pairwise disjoint node sets in a causal \mpdag{} $\g$. Then $\mathbf{Z}$ is an adjustment set (Definition~\ref{defadjustmentmpdag}) relative to $(\mathbf{X,Y})$ in $\g$ if and only if $\mathbf{Z}$ satisfies the b-adjustment criterion (Definition~\ref{def:gac-pdag}) relative to $(\mathbf{X,Y})$ in $\g$.
   \label{theorem:gac-pdag}
\end{theorem}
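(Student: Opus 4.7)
The plan is to reduce the theorem to the known adjustment-criterion result for $\DAG$s \citep{shpitser2012validity,shpitser2012avalidity}, using the observation that a density is consistent with a causal \mpdag{} $\g$ if and only if it is consistent with some $\DAG$ in $[\g]$. Hence $\mathbf{Z}$ is an adjustment set in $\g$ exactly when it is an adjustment set in every $\DAG$ in $[\g]$, and the theorem reduces to the purely graph-theoretic equivalence: $\mathbf{Z}$ satisfies the b-adjustment criterion in $\g$ if and only if $\mathbf{Z}$ satisfies the standard adjustment criterion in every $\DAG$ $\g[D] \in [\g]$.

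For soundness, fix $\g[D] \in [\g]$. The DAG amenability condition is automatic since $\g[D]$ has no undirected edges. For the DAG forbidden-set condition, Lemma~\ref{lemma:posdir-path}, taken contrapositively, implies that every causal path in $\g[D]$ corresponds to a b-possibly causal path in $\g$, and hence every descendant in $\g[D]$ of a node on such a path is a b-possible descendant of that node in $\g$; this yields $\Forbb(\mathbf{X},\mathbf{Y},\g[D]) \subseteq \bForbb(\mathbf{X},\mathbf{Y},\g)$, so condition~\ref{cond1} in $\g$ transfers to $\g[D]$. The delicate step is blocking: suppose towards contradiction that some proper non-causal path $q$ from $\mathbf{X}$ to $\mathbf{Y}$ in $\g[D]$ is open given $\mathbf{Z}$. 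I would take $q$ of minimum length among such paths and argue via a standard shortcut-at-shield argument that $q$ is unshielded, hence of definite status in $\g[D]$, and the corresponding node sequence $q^{*}$ in $\g$ is a definite status path. By Lemma~\ref{lemma:def-stat-posdir}, the backward edge witnessing non-causality of $q$ in $\g[D]$ is also present in $\g$, so $q^{*}$ is b-non-causal in $\g$. The remaining care is in checking that $q^{*}$ is d-connecting given $\mathbf{Z}$ in $\g$: a collider of $q$ in $\g[D]$ with a descendant in $\mathbf{Z}$ must also be a collider of $q^{*}$ in $\g$ with a b-possible descendant in $\mathbf{Z}$, and a definite non-collider of $q^{*}$ in $\g$ is a non-collider of $q$ in $\g[D]$, which is not in $\mathbf{Z}$. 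This contradicts condition~\ref{cond2}.

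For completeness, assume $\mathbf{Z}$ violates the b-adjustment criterion in $\g$. I construct a $\DAG$ $\g[D] \in [\g]$ in which $\mathbf{Z}$ violates the standard adjustment criterion, and invoke DAG completeness to produce a density consistent with $\g[D]$ (hence with $\g$) for which the adjustment formula fails. If condition~\ref{cond1} fails, with $W' \in \mathbf{Z}$ witnessed by $W$ and a proper b-possibly causal path $p$ from $\mathbf{X}$ to $\mathbf{Y}$, I would extend $\g$, via Meek's orientation rules in Figure~\ref{fig:orientationRules}, to a $\DAG$ $\g[D] \in [\g]$ in which $p$ becomes directed causal and the b-possibly causal path from $W$ to $W'$ becomes directed causal, placing $W' \in \Forbb(\mathbf{X},\mathbf{Y},\g[D]) \cap \mathbf{Z}$. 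If condition~\ref{cond2} fails, I would similarly construct a $\DAG$ in $[\g]$ realising the offending b-non-causal definite status path as a non-causal path that is open given $\mathbf{Z}$. If condition~\ref{cond0} fails, with a proper b-possibly causal path beginning $X - V$, the argument splits: one extension makes the path causal so that $V \in \Forbb(\mathbf{X},\mathbf{Y},\g[D])$, forcing $V \notin \mathbf{Z}$; the other orients $V \to X$ and reduces, together with the tail of the path, to a violation of condition~\ref{cond2} in some $\g[D] \in [\g]$.

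The main obstacle I expect is the orientation-extension step in the completeness argument, because partially directed cycles in a \mpdag{} (which cannot occur in $\CPDAG$s) may obstruct naive orientation of edges along a b-possibly causal path. The technical core is therefore a lemma asserting that for any b-possibly causal path $p$ in $\g$, one can orient the undirected edges of $p$ as a causal path inside some $\DAG$ in $[\g]$; I would prove this by induction on the number of undirected edges of $p$, repeatedly applying rules $R1$--$R4$ of Figure~\ref{fig:orientationRules} to propagate the chosen orientations without introducing directed cycles or new unshielded colliders.
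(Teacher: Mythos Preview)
Your overall reduction to the DAG adjustment criterion, and your treatment of the forbidden-set transfer and of completeness, match the paper's approach. (For completeness the paper avoids your proposed induction lemma: it passes to unshielded subsequences via Lemma~\ref{lemma:unshielded-analog} and uses Lemma~\ref{lemma:no-new-into} to pick a $\DAG$ with no additional edges into the relevant node, which forces the needed paths to be causal.) The real problem is in your soundness argument for blocking.

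First, a shortest proper non-causal d-connecting path $q$ in $\g[D]$ is \emph{not} in general unshielded. Take $q = X \to A \leftarrow B \to Y$ in $\g[D]$ with an additional edge $A \to Y$; then $\langle A,B,Y\rangle$ is shielded, but the shortcut $X \to A \to Y$ is causal, so minimality over non-causal paths does not eliminate the shield. The paper never claims unshieldedness; it proves the weaker statement that the corresponding path $q^{*}$ in $\g$ is of definite status (Lemma~\ref{lemma:1-pdag}), via a four-case analysis on how the shortcut at the first non-definite-status node can be blocked. Second, your appeal to Lemma~\ref{lemma:def-stat-posdir} is in the wrong direction: a backward edge on $q$ in $\g[D]$ may be undirected in $\g$, so $q^{*}$ could a priori be b-possibly causal. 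The paper rules this out (Lemma~\ref{lemma:non-causal-path-pdag}) by using conditions~\ref{cond0} and~\ref{cond1} together: if $q^{*}$ were b-possibly causal, b-amenability forces $q$ to start $X \to V_1$, so the first collider on $q$ lies in $\bfb{\g[D]} \subseteq \bfb{\g}$ and has a descendant in $\mathbf{Z}$, contradicting~\ref{cond1}. Your sketch does not invoke~\ref{cond1} here at all. Third, d-connection of $q^{*}$ in $\g$ requires each collider to have a \emph{descendant} in $\mathbf{Z}$ along a directed path in $\g$, not merely a b-possible descendant; your argument only delivers the latter. The paper fixes this by additionally choosing $q$ with minimal $\distancefrom{\mathbf{Z}}$ (Definition~\ref{def:distance from Z}) and showing, via rules $R1$--$R3$ and the unshieldedness of collider triples established in Lemma~\ref{lemma:1-pdag}, that the shortest b-possibly causal path from each collider to $\mathbf{Z}$ is actually directed in $\g$ (Lemma~\ref{lemma:2-pdag}).
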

\begin{definition}$(\badjustb{\g})$
Let $\mathbf{X}$ and $\mathbf{Y}$ be disjoint node sets in a \mpdag{} $\g$. We define
\begin{align}
&\badjustb{\g}=  \nonumber \\
&\bPossAn(\mathbf{X} \cup \mathbf{Y}, \g)\setminus(\mathbf{X} \cup \mathbf{Y} \cup \bfb{\g}). \nonumber
\end{align}
\label{def:constr-set-mpdag}
\end{definition}
\vspace{-.4cm}
\begin{theorem}
 Let $\mathbf{X}$ and $\mathbf{Y}$ be disjoint node sets in a \mpdag{} $\g$. There exists a set that satisfies the b-adjustment criterion relative to $(\mathbf{X,Y})$ in $\g$ if and only if $\badjustb{\g}$ satisfies the b-adjustment criterion relative to $(\mathbf{X,Y})$ in $\g$.
\label{theorem:constructive-set-mpdag}
\end{theorem}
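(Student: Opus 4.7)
The ``only if'' direction is trivial, since $\badjustb{\g}$ being an adjustment set directly certifies the existence of one. The content of the theorem is the ``if'' direction. Assume some $\mathbf{Z}$ satisfies the b-adjustment criterion relative to $(\mathbf{X},\mathbf{Y})$ in $\g$; I claim $\mathbf{Z}^{*} := \badjustb{\g}$ also does. Condition \ref{cond0} is a property of $\g$, $\mathbf{X}$, and $\mathbf{Y}$ alone and is therefore inherited from the hypothesis on $\mathbf{Z}$. Condition \ref{cond1} holds for $\mathbf{Z}^{*}$ by construction, since $\mathbf{Z}^{*}\cap\bfb{\g}=\emptyset$ by Definition~\ref{def:constr-set-mpdag}. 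The entire proof therefore reduces to verifying condition \ref{cond2} for $\mathbf{Z}^{*}$.

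For \ref{cond2} I would argue by contradiction: pick a proper b-non-causal definite status path $p = \langle X = V_0, V_1, \dots, V_k = Y\rangle$ from $\mathbf{X}$ to $\mathbf{Y}$ that is d-connecting given $\mathbf{Z}^{*}$, choosing $p$ to have the smallest number of edges among such paths. The first observation is that every definite non-collider $V_i$ on $p$ lies outside $\mathbf{Z}^{*}=\bPossAn(\mathbf{X}\cup\mathbf{Y},\g)\setminus(\mathbf{X}\cup\mathbf{Y}\cup\bfb{\g})$, so $V_i$ is either not a b-possible ancestor of $\mathbf{X}\cup\mathbf{Y}$ or it lies in $\bfb{\g}\cup\mathbf{X}\cup\mathbf{Y}$; properness of $p$ excludes interior $V_i\in\mathbf{X}$, minimality of $p$ excludes interior $V_i\in\mathbf{Y}$ (by truncation), and condition \ref{cond1} on $\mathbf{Z}$ excludes $V_i\in\bfb{\g}\cap\mathbf{Z}$, so $V_i\notin\mathbf{Z}$ for every definite non-collider on $p$. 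Consequently $\mathbf{Z}$ can only block $p$ at a collider that lacks a descendant in $\mathbf{Z}$.

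The second observation handles the colliders. Each collider $V_i$ on $p$ has a descendant $D_i$ in $\mathbf{Z}^{*}$, and $D_i\in\bPossAn(\mathbf{X}\cup\mathbf{Y},\g)$, so there is a b-possibly causal witness path $q_i$ from $V_i$ to some $T_i\in\mathbf{X}\cup\mathbf{Y}$. If $q_i$ meets $\mathbf{Z}$, then $V_i$ has a descendant in $\mathbf{Z}$ and $\mathbf{Z}$ fails to block $p$ at $V_i$. If not, one splices the initial segment of $q_i$ up to its first intersection with $\mathbf{X}\cup\mathbf{Y}\cup\mathbf{Z}$ onto $p$ at $V_i$; applying Lemma~\ref{lemma:unshielded-analog} to trim the resulting walk to an unshielded, hence definite status, representative and Lemma~\ref{lemma:def-stat-posdir} to certify the absence of backward edges along the spliced segment yields a proper b-non-causal definite status path $p'$ from $\mathbf{X}$ to $\mathbf{Y}$ that is d-connecting given $\mathbf{Z}$. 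In either case, the conclusion contradicts the blocking condition for $\mathbf{Z}$.

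The main obstacle, and the place where the argument departs substantively from the $\CPDAG$ proof of \cite{perkovic16}, is controlling the splicing in the presence of partially directed cycles in $\g$. In a $\CPDAG$ chordality of the undirected component and the absence of partially directed cycles permit free interchange between shielded and unshielded subpaths; in a \mpdag{} a spliced detour may traverse a partially directed cycle and threaten either the properness of $p'$ or its b-non-causal character. I expect to resolve this through a careful case analysis on the first intersection of each $q_i$ with $\mathbf{X}\cup\mathbf{Y}\cup\mathbf{Z}$, using condition \ref{cond0} on $\g$ (which forces every proper b-possibly causal path from $\mathbf{X}$ to $\mathbf{Y}$ to begin with a directed edge out of $\mathbf{X}$) together with acyclicity of the directed skeleton of $\g$ to rule out the pathological splices that re-enter $\mathbf{X}$ in a way incompatible with $p'$ being proper and b-non-causal.
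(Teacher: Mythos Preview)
Your setup through conditions \ref{cond0} and \ref{cond1} matches the paper, as does taking a shortest proper b-non-causal definite status path $p$ that is d-connecting given $\mathbf{Z}^{*}$. But both of your observations have gaps. In the first, from $V_i\notin\mathbf{Z}^{*}$ you get that either $V_i\notin\bPossAn(\mathbf{X}\cup\mathbf{Y},\g)$ or $V_i\in\mathbf{X}\cup\mathbf{Y}\cup\bfb{\g}$, yet you only derive $V_i\notin\mathbf{Z}$ under the second alternative; nothing you wrote rules out a definite non-collider with $V_i\notin\bPossAn(\mathbf{X}\cup\mathbf{Y},\g)$ and $V_i\in\mathbf{Z}$. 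The paper eliminates this case by first showing it is vacuous: every collider on $p$ has a descendant in $\mathbf{Z}^{*}\subseteq\bPossAn(\mathbf{X}\cup\mathbf{Y},\g)$, and every definite non-collider on $p$ is a b-possible ancestor of an endpoint or of a collider on $p$, so every definite non-collider in fact lies in $\bPossAn(\mathbf{X}\cup\mathbf{Y},\g)$ and therefore in $\bfb{\g}$. In the second observation, your witness $q_i$ is only b-possibly causal, so ``$q_i$ meets $\mathbf{Z}$'' gives $V_i$ merely a b-possible descendant in $\mathbf{Z}$, not a descendant; that is not enough to keep $\mathbf{Z}$ from blocking $p$ at $V_i$.

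More importantly, the paper never splices. Once it knows every definite non-collider on $p$ is in $\bfb{\g}$ and every collider is not (a collider in $\bfb{\g}$ would have all its descendants in $\bfb{\g}$, hence none in $\mathbf{Z}^{*}$), it observes that a non-collider $V_{i-1}\in\bfb{\g}$ with $V_{i-1}\to V_i$ on $p$ would drag the adjacent collider $V_i$ into $\bfb{\g}$. This structural constraint forces $p$ to have no definite non-colliders at all and collapses it to $X\to V_1\leftarrow Y$. The contradiction then comes from a single auxiliary path: a shortest b-possibly causal definite status path $q$ from $V_1$ to $\mathbf{X}\cup\mathbf{Y}$ must end in $\mathbf{X}$ (else $V_1\in\bfb{\g}$), and the reversal of $\langle Y,V_1\rangle\oplus q$, after passing to an unshielded subsequence via Lemma~\ref{lemma:unshielded-analog}, is a collider-free proper b-non-causal definite status path from $\mathbf{X}$ to $\mathbf{Y}$ that $\mathbf{Z}$ cannot block consistently with blocking $p$. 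This collapse to a length-two path is the key idea you are missing; it replaces your open-ended splicing case analysis entirely.
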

\begin{figure}[!tbp]
   \centering
   \begin{subfigure}{.16\textwidth}
     \centering
     \begin{tikzpicture}[>=stealth',shorten >=1pt,auto,node distance=2cm,main node/.style={minimum size=0.6cm,font=\sffamily\Large\bfseries},scale=0.6,transform shape]
     \node[main node]         (X)                        {$X$};
   \node[main node]         (V1) [left of= X]  		{$V_{1}$};
   \node[main node]         (V2) [below left of = X] 	{$V_{2}$};
   \node[main node]       	 (Y)  [below right of= V2] 	{$Y$};
   \draw[-] (V2) edge    (X);
   \draw[-] (V1) edge    (X);
   \draw[-] (X) edge    (Y);
   \draw[-] (V2) edge    (Y);
   \end{tikzpicture}
     \caption{}
     \label{visible:adj1}
   \end{subfigure}
   \unskip
   \vrule
   \begin{subfigure}{.16\textwidth}
     \centering
     \begin{tikzpicture}[>=stealth',shorten >=1pt,auto,node distance=2cm,main node/.style={minimum size=0.6cm,font=\sffamily\Large\bfseries},scale=.6,transform shape]
   \node[main node]         (X)                        {$X$};
   \node[main node]         (V1) [left of= X]  		{$V_{1}$};
   \node[main node]         (V2) [below left of = X] 	{$V_{2}$};
   \node[main node]       	 (Y)  [below right of= V2] 	{$Y$};
   \draw[<-] (V2) edge    (X);
   \draw[->] (V1) edge    (X);
   \draw[->] (X) edge    (Y);
   \draw[-] (V2) edge    (Y);
   \end{tikzpicture}
     \caption{}
     \label{visible:adj2}
   \end{subfigure}
   \unskip
   \vrule
   \begin{subfigure}{.16\textwidth}
     \centering
     \begin{tikzpicture}[>=stealth',shorten >=1pt,auto,node distance=2cm,main node/.style={minimum size=0.6cm,font=\sffamily\Large\bfseries},scale=.6,transform shape]
   \node[main node]         (X)                        {$X$};
   \node[main node]         (V1) [left of= X]  		{$V_{1}$};
   \node[main node]         (V2) [below left of = X] 	{$V_{2}$};
   \node[main node]       	 (Y)  [below right of= V2] 	{$Y$};
   \draw[-] (V2) edge    (X);
   \draw[<-] (V1) edge    (X);
   \draw[->] (Y) edge    (X);
   \draw[-] (V2) edge    (Y);
   \end{tikzpicture}
     \caption{}
     \label{visible:adj3}
   \end{subfigure}
   \caption{(a) $\CPDAG$ $\g[C]$, (b) \mpdag{} $\g_{1}$, (c) \mpdag{} $\g_{2}$ used in Example \ref{ex2}.}
   \label{figex2}
\end{figure}
\begin{example}
We use this example to illustrate the b-adjustment criterion.
   Consider Figure~\ref{figex2} with a $\CPDAG$ $\g[C]$ in (a), and two \mpdag{}s $\g_{1}$ and $\g_{2}$ in $[\g]$ in (b) and (c). \Mpdag{} $\g_{1}$ ($\g_{2}$) can be obtained from $\g[C]$ by adding $V_1 \rightarrow X$ ($Y \rightarrow X$) as background knowledge and completing the orientation rules from Figure~\ref{fig:orientationRules}.

 $\g[C]$ is not b-amenable relative to $(X,Y)$ due to $X - Y$ and $X -V_2 -Y$. Hence, there is no adjustment set (no set satisfies Definition~\ref{def:gac-pdag}) relative to $(X,Y)$ in $\g$.

  On the other hand, $\g_{1}$ is b-amenable relative to $(X,Y)$ and $\bbf{\g_{1}} = \{ V_2, Y\}$. Since there are no b-non-causal paths from $X$ to $Y$ in $\g_{1}$ any set of nodes disjoint with $\bbf{\g_{1}} \cup \{X,Y\}$ satisfies the b-adjustment criterion relative to $(X,Y)$. Hence, all valid adjustment sets relative to $(X,Y)$ in $\g_{1}$ are $\emptyset$ and $\{V_1\}$.

 \Mpdag{} $\g_2$ is also b-amenable relative to $(X,Y)$ (since $X-V_2 -Y$ is a b-non-causal path). Since $X \leftarrow Y$ is in $\g_{2}$, $\bbf{\g_{2}} = \emptyset$. However, since $X \leftarrow Y$ is a proper b-non-causal definite status path from $X$ to $Y$ that cannot be blocked by any set of nodes, there is no adjustment set (no set satisfies Definition~\ref{def:gac-pdag}) relative to $(X,Y)$ in $\g_2$. Nevertheless, since $Y \notin \bPossDe(X,\g_{2})$, we can conclude that the total causal effect of $X$ on $Y$ in $\g_{2}$ is zero.
   \label{ex2}
\end{example}

\subsubsection{Constructing adjustment sets}

Checking whether $\mathbf{Z}$ is an adjustment set relative to $(\mathbf{X,Y})$ in $\g$ requires checking the three conditions in Definition~\ref{def:gac-pdag}: b-amenability, b-forbidden set and b-blocking. Checking \bamen{} or \bforb{} is computationally straightforward and depends only on constructing the set of all b-possible descendants of $\mathbf{X}$. 
Naively checking \bblck{}, however, requires keeping track of all paths between $\mathbf{X}$ and $\mathbf{Y}$. This scales very poorly with the size of the graph. To deal with this issue we rely on Lemma~\ref{lemma:eqb-pdag} in the supplement, which is analogous to Lemma~10 in \citealp{perkovic16}.

If $\g$ is b-amenable and $\mathbf{Z}$ satisfies \bforb{} relative to $(\mathbf{X,Y})$, then in order to verify that $\mathbf{Z}$ satisfies \bblck{} in $\g$ it is enough to verify that $\mathbf{Z}$ satisfies \bblck{} in one $\DAG$ in $[\g]$ (\ref{l:eqb3-pdag} in Lemma~\ref{lemma:eqb-pdag} in the supplement).
In \cite{vanconstructing}, the authors propose fast algorithms to verify \bblck{} and construct adjustment sets in $\DAG$s. Using the above mentioned result we can leverage these for use in \mpdag{}s.

The computational complexity of finding one $\DAG$ $\g[D]$ in $[\g]$ is $O(|\mathbf{V}||\mathbf{E}|)$, as shown in \cite{dorTarsi92}. The computational complexity of verifying whether a set satisfies the b-adjustment criterion in $\g[D]$  is $O(|\mathbf{V}| +|\mathbf{E}|)$, as shown in \cite{vanconstructing}. Furthermore, the computational complexity of listing all or all minimal adjustment sets in $\g[D]$, is at most polynomial in $|\mathbf{V}|$ and $|\mathbf{E}|$ per output set, as shown in \cite{vanconstructing}. The complexity of verifying \bblck{} in $\g$ when exploiting the results of Lemma~\ref{lemma:eqb-pdag}, \cite{dorTarsi92} and \cite{vanconstructing} is polynomial in $|\mathbf{V}|$ and $|\mathbf{E}|$ and listing all or all minimal adjustment sets in $\g$ is polynomial in $|\mathbf{V}|$ and $|\mathbf{E}|$ per output set.

\subsection{IDA AND JOINT-IDA IN \MPDAG{}s} \label{sec:ida}

It is not always possible to find an adjustment set relative to $(\mathbf{X,Y})$ in a \mpdag{} $\g$. For example, if the total causal effect of $\mathbf{X}$ on $\mathbf{Y}$ differs in some distinct $\DAG$s in $[\g]$, then this effect is not identifiable in $\g$, and is certainly not identifiable via adjustment.

The IDA algorithm for $\CPDAG$s from \cite{MaathuisKalischBuehlmann09} was developed with precisely this issue in mind.
In order to estimate the possible total causal effects of a node $X$ on a node $Y$ based on a $\CPDAG$ $\g[C]$ one can consider listing all $\DAG$s in $[\g[C]]$ and estimating the total causal effect of $X$ on $Y$ in each. Since this effect may differ between different $\DAG$s in $\g$, the output of such an algorithm is a multi-set of possible total causal effects of $X$ on $Y$.
The joint-IDA algorithm for $\CPDAG$s from \cite{nandy2014estimating} employs the same idea to estimate the possible total joint causal effect of a node set $\mathbf{X}$ on a node $Y$.

The IDA and joint-IDA algorithms use all possible (joint) parent sets of $\mathbf{X}$ in the $\DAG$s in $[\g]$. If $Y$ is not a parent of $X$ in a causal $\DAG$ $\g[D]$, it is well known that $\Pa(X,\g[D])$ is an adjustment set relative to $(X,Y)$ in $\g[D]$ \citep{Pearl2009}. 

The (joint-)IDA algorithm has different algorithmic variants for finding (joint) parent sets: global, local and semi-local. The global variant applies the above mentioned idea of listing all $\DAG$s represented by a $\CPDAG$ $\g[C]$ in order to find all possible (joint) parent sets of $\mathbf{X}$. This method can be applied directly to a \mpdag{} $\g$, but scales poorly with the number of undirected edges in $\g$.
The local variant of IDA from \cite{MaathuisKalischBuehlmann09} and the semi-local variant of joint-IDA from \cite{nandy2014estimating} dramatically reduce the computational complexity of the algorithm. However, they are not applicable to \mpdag{}s, as we demonstrate in Examples~\ref{ex:local-ida} and~\ref{ex:semiloc-jida}.

\begin{algorithm}[!t]
\TitleOfAlgo{Semi-locally find all joint parent sets of $\mathbf{X} = (X_1,\dots ,X_k), k \ge 1,$ in a \mpdag{} $\g$.}
 \KwData{\mpdag{} $\g$, $\mathbf{X} = \{X_1,\dots ,X_k\}, k \ge 1$}
 \KwResult{Multi-set $\mathbf{PossPa}$ of all joint parent sets of $\mathbf{X}$}
	$\mathbf{PossPa}=\emptyset$; \\
	$\mathbf{Sib_1} =\{ A: A - X_1 \text{ in } \g \}$; \\
	\If{$k >1$}{
 $\mathbf{Sib}_i = \{ A: A - X_i \text{ in } \g \} \setminus \{X_1, \dots, X_{i-1} \}$; \\
 }
 \ForAll{$\mathbf{S}_i \subseteq \mathbf{Sib}_i$, $i = 1 , \dots , k$,}{
$ \begin{aligned}
  \mathbf{LocalBg} = &\cup_{i=1}^k \{ A \rightarrow X_i : A \in \mathbf{S}_i\} \cup \\
  &\cup_{i=1}^k \{ X_i \rightarrow A : A \in \mathbf{Sib}_i \setminus \mathbf{S}_i\};
 \end{aligned}$\\
 \If{$\ConstructMaxPDAG(\g,\mathbf{LocalBg}) \neq \text{FAIL}$}{
  add $(\Pa(X_1,\g) \cup \mathbf{S}_1 ,\dots , \Pa(X_k,\g) \cup \mathbf{S}_k)$ to $\mathbf{PossPa}$;\\
  }
}
\label{algo:ida}
\vspace{-.3cm}
\end{algorithm}

In Algorithm~\ref{algo:ida}, we present our semi-local variant for finding possible (joint) parent sets in \mpdag{}s. This algorithm exploits the soundness and completeness of orientations in the \mpdag{} \citep{meek1995causal}. To find a possible parent set of $X$, Algorithm~\ref{algo:ida} simply considers all $\mathbf{S} \subseteq \Sib(X,\g)$ and imposes $S \rightarrow X$ and $X \rightarrow \bar{S}$, for all $S \in \mathbf{S}$, and $\bar{S} \in \Sib(X,\g)\setminus \mathbf{S}$ as background knowledge called $\mathbf{LocalBg}$. If this background knowledge is consistent with $\g$, then $\mathbf{S} \cup \Pa(X,\g)$ is a possible parent set of $X$. In case of joint interventions $(|\mathbf{X}|=k>1)$, Algorithm~\ref{algo:ida} does this for every $X_i \in \mathbf{X}, i \in \{1,\dots,k\}$ and $(\Pa(X_1,\g) \cup \mathbf{S}_1 ,\dots , \Pa(X_k,\g) \cup \mathbf{S}_k)$ is the possible joint parent set of $\mathbf{X}$.

Since the local IDA and semi-local joint-IDA are not applicable to \mpdag{}s, we can only compare the computational complexity of our Algorithm~\ref{algo:ida} with the local IDA and semi-local joint-IDA by applying them all to a $\CPDAG$. In this case, Algorithm~\ref{algo:ida} will in general be slower than local IDA, as it requires closing the orientation rules of \cite{meek1995causal}, but also in general somewhat faster than semi-local joint IDA, since it does not require orienting an entire undirected component of the $\CPDAG$. We compare the runtimes of local IDA and our semi-local IDA in an empirical study in Section~\ref{sec:study} of the supplement.

\subsubsection{Examples}

\begin{example}
Consider the \mpdag{} $\g$ in Figure~\ref{mpdag11}. Suppose we want to estimate the total causal effect of $C$ on $A$ in $\g$.
All possible parent sets of $C$ according to Algorithm~\ref{algo:ida} are: $\emptyset , \{D\}, \{B,D\}$. These are also all possible parent sets of $C$ according to Figure~\ref{alldagmpdag}.

We now show that local IDA cannot be applied.
For every $\mathbf{S} \subseteq \Sib(C,\g)$, local IDA from \cite{MaathuisKalischBuehlmann09} orients $S -C$ as $S \rightarrow C$ for every $S \in \mathbf{S}$. If this does not introduce a new unshielded collider $\rightarrow C \leftarrow$ it returns $\Pa(C,\g) \cup \mathbf{S}$ as a valid parent set.  Local IDA returns the following sets as the possible parent sets of $C$: $\emptyset , \{B\}, \{D\}, \{B,D\}$. However, parent set $\{B\}$ means adding $B \rightarrow C$ and $C \rightarrow D$ and this introduces a cycle due to $D \rightarrow B$. Hence, $\{B\}$ will never be a parent set of $C$ in $[\g[C]]$ and local IDA is not valid for \mpdag{}s.
\label{ex:local-ida}
\end{example}

\begin{example}
Consider again the \mpdag{} $\g$ in Figure~\ref{mpdag11}. Suppose we want to estimate the total causal effect of $(C,D)$ on $A$ in $\g$.
All possible joint parent sets of $(C,D)$ according to Algorithm~\ref{algo:ida} are: $(\emptyset,\{C\}), (\{D\},\emptyset), (\{B,D\},\emptyset), (\{B,D\},\{A\})$. These are also all possible joint parent sets of $(C,D)$ according to Figure~\ref{alldagmpdag}.

Semi-local joint-IDA from \cite{nandy2014estimating} would attempt to learn all possible joint parent sets of $(C,D)$ by orienting the undirected component $B-C-D-A$ on $\g$ into all possible $\DAG$s without unshielded colliders. However, it is not possible to orient $B-C-D-A$ into a $\DAG$ without creating an unshielded collider. Hence, semi-local joint-IDA is not valid for \mpdag{}s.
\label{ex:semiloc-jida}
\end{example}

\section{IMPLEMENTATION AND SIMULATION STUDY} \label{sec:sim}

We investigated the effect of adding background
knowledge to a $\CPDAG$ in a simulation study using \texttt{R (3.3.3)} and the R-package \texttt{pcalg (2.4-6)} \citep{kalischpcalg}. The following functions were added or modified: \texttt{isValidGraph()}, \texttt{addBgKnowledge()}, \texttt{adjustment()}, \texttt{gac()}, \texttt{ida()}, \texttt{jointIda()}. Details about the simulation can be found in Appendix A of the \texttt{pcalg} package vignette: ``An Overview of the \texttt{pcalg} Package for \texttt{R}'', available on CRAN.
 
We first sampled $1000$ settings that were used
to generate graphs later on. The following settings were drawn uniformly at random:
the number of nodes $p \in \{20, 30, ..., 100\}$, and the expected neighborhood size $E[N] \in \{3, 4, ...,10\}$.

For each of these $1000$ settings, $20$ $\DAG$s were randomly generated and then transformed into the
corresponding $\CPDAG$. This resulted in
$20\ 000$ $\CPDAG$s. For each graph, we randomly chose a node $X$ and then
randomly chose a node $Y$ that was connected to $X$ but was not a parent of
$X$ in the true underlying $\DAG$. Furthermore, for each DAG we generated a data set with sample size $n=200$.

For each of the $20\ 000$ $\CPDAG$s, we then generated additional \mpdag{}s by
replacing a fraction of randomly chosen undirected edges by the
true directed edges of the underlying $\DAG$ and applying the orientation
rules in Figure~\ref{fig:orientationRules} to further orient edges if possible. The fraction of background knowledge varied through all
values in the set $\{0, 0.1, \dots, 0.9, 1\}$, resulting in eleven \mpdag{}s. Note that a fraction of $0$ corresponds to the $\CPDAG$ and a fraction of $1$ corresponds
to the true underlying $\DAG$.

For each of the $220\ 000$ \mpdag{}s $\g$ we analyzed two questions:
\begin{description}
\item[Q1:] Is there a set that satisfies the b-adjustment criterion relative to
  $(X,Y)$ in $\g$?
\item[Q2:] What is the multi-set of possible total causal effects of $X$ on $Y$ given
  $\g$ and the sampled data of the corresponding $\DAG$? In particular,
  what is the number of unique estimates in the multi-set?
\end{description}

\begin{figure}[!tb]
\centering
\includegraphics[scale=.4]{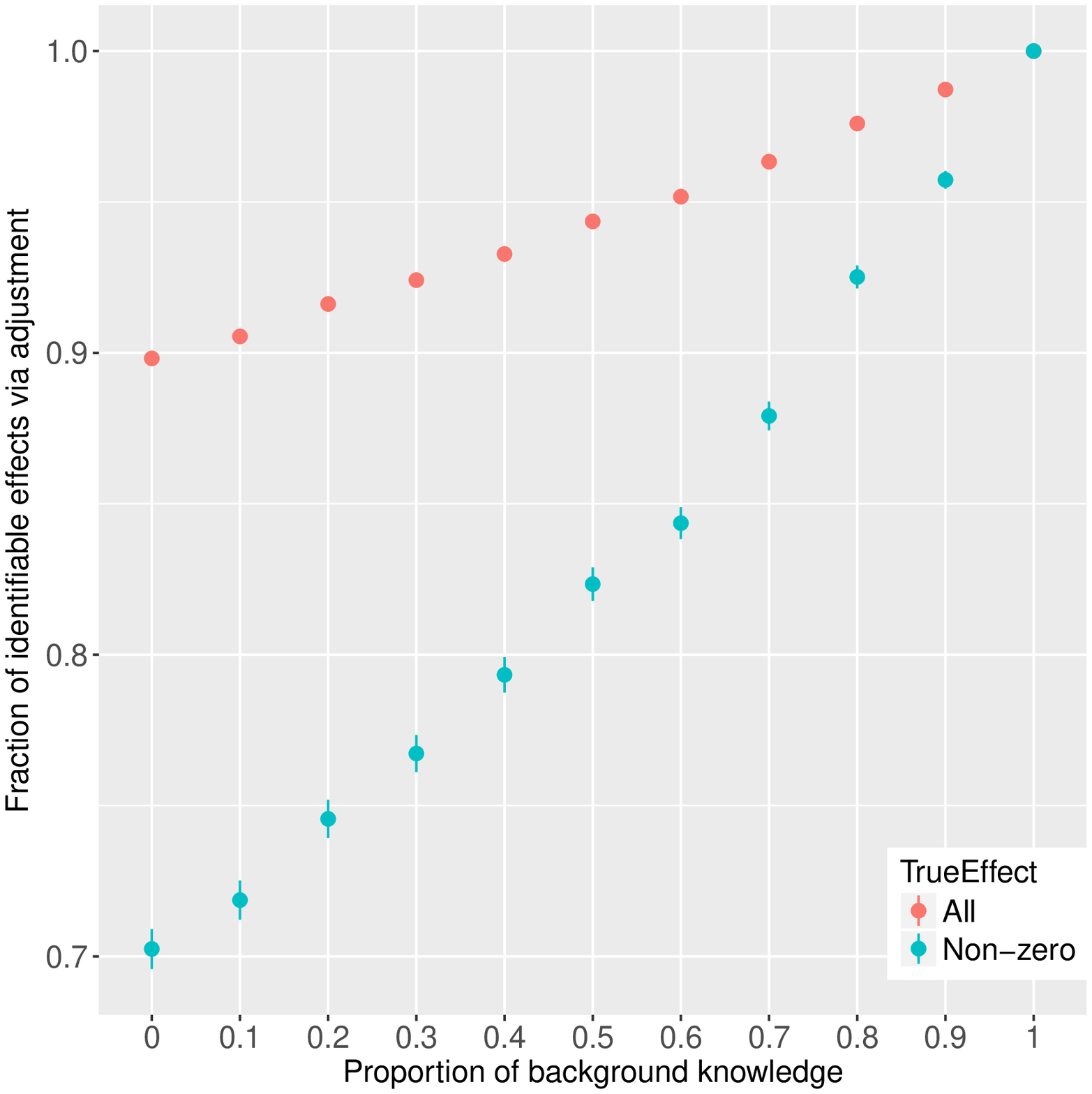}
\caption{Fraction of total causal effects that are identifiable with our adjustment criterion, with respect to the proportion of
background knowledge.
Red: all simulations;
blue: those simulations where the
true total causal effect is non-zero. Error bars indicate standard errors and are
partly too small to be seen.}
\label{fig:Q1}
\end{figure}

The results for question (\textbf{Q1}) are shown in Figure~\ref{fig:Q1}.
We see that without any background knowledge around $90\%$ of all total causal effects
could be identified via adjustment, while around $70\%$ of the non-zero total causal effects could be
identified via adjustment. When the proportion of background knowledge increases, the
fraction of total causal effects that we identify via adjustment increases, both for all total causal effects and for the
non-zero total causal effects. With $100\%$ background knowledge, the \mpdag{}
is identical to the true underlying $\DAG$ and identification of the total causal effect of $X$ on $Y$ via covariate adjustment is always possible, since $X$ is a single intervention and $Y$ is not a parent of $X$ in the $\DAG$ \citep{Pearl2009}.

\begin{figure}[!t]
\centering
\includegraphics[scale=.4]{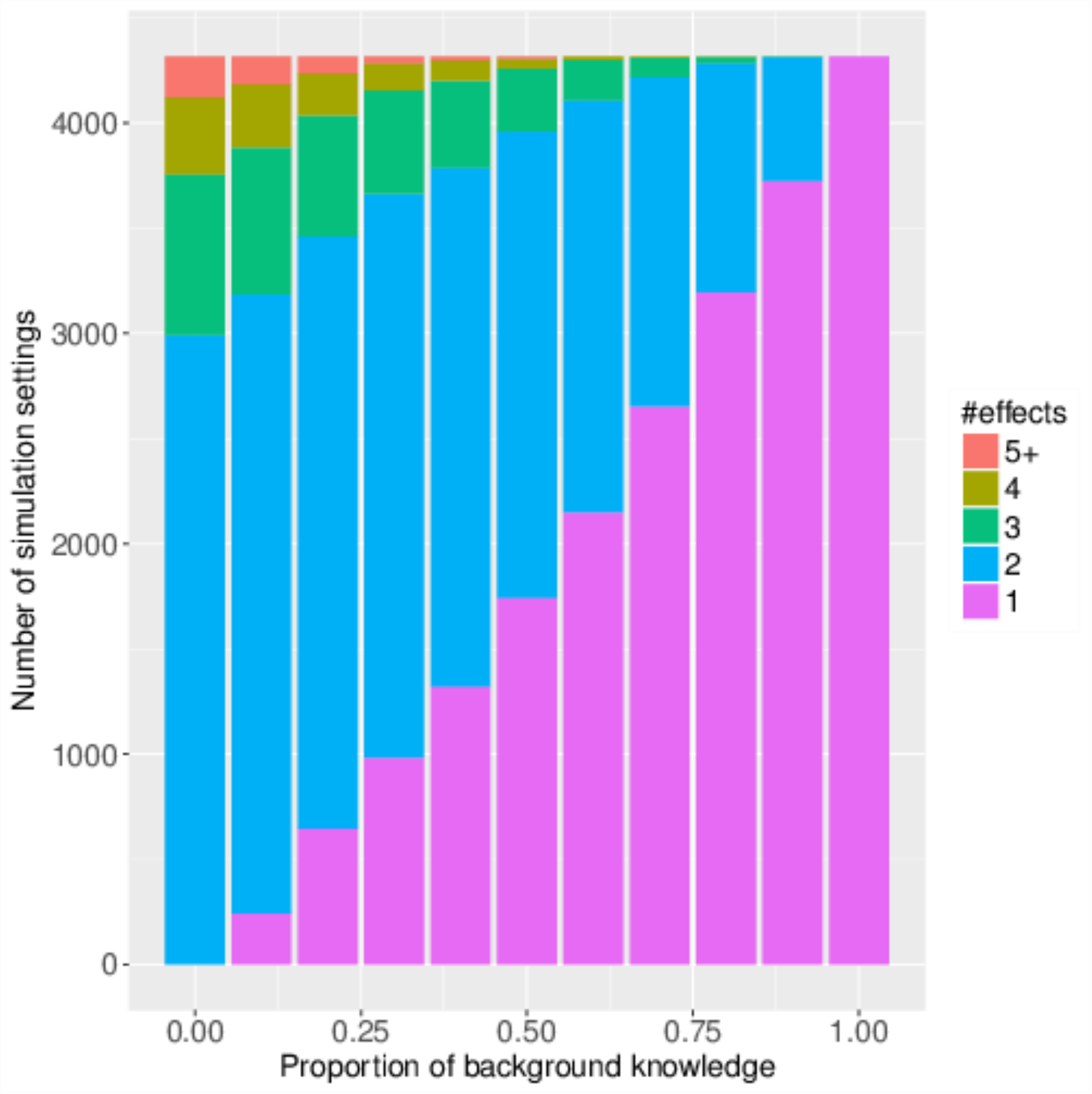}
\caption{Stacked barplot of the number of unique possible total causal effects in the multi-sets of our semi-local
  IDA, with respect to the proportion of background knowledge. The results are restricted to the $4315$ $\CPDAG$s (out of $20\ 000$) that have more than one unique estimated possible total causal effect in the multi-set.}
\label{fig:Q2}
\vspace{-.4cm}
\end{figure}

The results for questions (\textbf{Q2}) are shown in Figure~\ref{fig:Q2},
restricting ourselves to the $4315$ $\CPDAG$s that had more than one
unique estimate in the multi-set.
When including
background knowledge, the fraction of multi-sets with exactly one unique
estimate gradually increases. Finally, with $100\%$ background knowledge,
the \mpdag{} is identical to the true underlying $\DAG$ and the multi-set
always contains one unique element.

\section{DISCUSSION} \label{sec:disc}

Although \mpdag{}s typically contain more
orientation information than $\CPDAG$s, this additional information has not been fully exploited in practice, due to a lack of understanding and methodology for \mpdag{}s. Our paper aims to make an important step in bridging this gap and in opening the way for the use of \mpdag{}s in practice.

This paper introduces various tools for working with \mpdag{}s. In particular, we are now able to read off possible ancestral relationships directly from a \mpdag{} and to estimate (possible) total causal effects when a \mpdag{} is given. Since $\CPDAG$s and $\DAG$s are special cases of \mpdag{}s, our b-adjustment criterion and semi-local (joint-)IDA methods for \mpdag{}s generalize existing results for $\CPDAG$s and $\DAG$s \citep{MaathuisKalischBuehlmann09,shpitser2012validity,perkovic15_uai,perkovic16,
nandy2014estimating}. All methods are implemented in the \texttt{R} package \texttt{pcalg}.

The examples and the simulation study in our paper involve \mpdag{}s generated by adding background knowledge to a $\CPDAG$. Nevertheless, we emphasize that \mpdag{}s can arise in many different ways, e.g., by
adding background knowledge before structure learning \citep{tetrad1998}, by structure learning from a combination of observational and interventional data \citep{hauserBuehlmann12,wang2017permutation}, or by structure learning for certain restricted model classes \citep{hoyer08,ernestroth2016,eigenmann17}.

It would be interesting to extend our methods to settings with hidden variables, e.g., considering partial ancestral graphs ($\PAG$s;  \citealp{richardson2002ancestral,ali2012towards}) with background knowledge. An important missing link for such an extension is a clear understanding of $\PAG$s with background knowledge. In particular, one would need to develop complete orientation rules for $\PAG$s with background knowledge, analogous to the work of \cite{meek1995causal}. Once this is in place, it seems feasible to generalize graphical criteria for covariate adjustment in $\PAG$s \citep{perkovic15_uai,perkovic16} and an IDA type method for $\PAG$s, called LV-IDA \citep{malinsky2017estimating}.

\subsubsection*{Acknowledgements}
This work was supported in part by Swiss NSF Grant 200021\_172603.

\newpage
\appendix

\section*{SUPPLEMENT}

This is the supplement of the paper ``Interpreting and using CPDAGs with background knowledge", which we refer to as the ``main text".

\section{PRELIMINARIES} \label{sec1}

\textbf{Paths.} If $p = \langle X_1, X_2, \dots , X_k, \rangle, k \ge 2$ is a path, then with $-p$ we denote the path $\langle X_k, \dots , X_2, X_1 \rangle$. The \textit{length} of a path equals the number of edges on the path. We denote the concatenation of paths by $\oplus$, so that for example $p = p(X_1,X_{m}) \oplus p(X_{m},X_{k})$ for $1 \le m \le k$.

\begin{definition} (\textbf{Distance-from-$\mathbf{Z}$})
Let $\mathbf{X,Y}$ and $\mathbf{Z}$ be pairwise disjoint node sets in a \mpdag{} $\g$. Let $p$ be a path from $\mathbf{X}$ to $\mathbf{Y}$ in~$\g$ such that every collider $C$ on $p$ has a b-possibly causal path to $\mathbf{Z}$. Define the $\distancefrom{\mathbf{Z}}$ of collider $C$ to be the length of a shortest b-possibly causal path from $C$ to $\mathbf{Z}$, and define the $\distancefrom{\mathbf{Z}}$ of $p$ to be the sum of the distances from $\mathbf{Z}$ of the colliders on $p$.
\label{def:distance from Z}
\end{definition}

\begin{lemma} \citep[Lemma~A.7 in][]{ernestroth2016}
Let $X$ and $Y$ be nodes in a  \mpdag{} $\g$ such that $X - Y$ is in $\g$. Let $\g' = \ConstructMaxPDAG(\g,\{X \rightarrow Y\})$. For any $Z,W \in \mathbf{V}$ if $Z \rightarrow W$ is in $\g'$ and $Z -W$ is in $\g$, then $W \in \De(Y,\g')$.
\label{lemma:alwaydesc}
\end{lemma}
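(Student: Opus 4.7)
The plan is to prove the lemma by induction on the sequence of orientations carried out by $\ConstructMaxPDAG(\g,\{X \to Y\})$. The base case is the initial step, which turns $X - Y$ into $X \to Y$; here $W = Y \in \De(Y,\g')$ by the convention that every node is its own descendant.

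For the inductive step, suppose some later step orients a hitherto undirected edge $Z - W$ of $\g$ as $Z \to W$ via an application of one of the Meek rules R1--R4 (Figure~\ref{fig:orientationRules}). The key observation is that, since $\g$ is itself a \mpdag{} and hence already closed under R1--R4, the complete firing configuration of the rule cannot have been present in $\g$; because $\ConstructMaxPDAG$ only orients undirected edges and never alters adjacencies, at least one directed ``trigger'' edge appearing in the rule's configuration must have been undirected in $\g$ and oriented at an earlier step of the construction. By the inductive hypothesis, the head of every such newly oriented trigger edge already lies in $\De(Y,\g')$.

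A short case analysis then finishes the argument. For R1 with trigger $A \to Z$ and new edge $Z \to W$, the trigger must be new, so $Z \in \De(Y,\g')$ and hence $W \in \De(Y,\g')$. For R2, which orients $A \to C$ from $A \to B \to C$ with $A - C$ in $\g$, at least one of the triggers $A \to B$, $B \to C$ is new: if $B \to C$ is new the IH gives $C \in \De(Y,\g')$ directly; otherwise $A \to B$ is new, the IH gives $B \in \De(Y,\g')$, and $B \to C$ in $\g'$ yields $C \in \De(Y,\g')$. For R3, which orients $A \to D$ from triggers $B \to D$ and $C \to D$, at least one of these newly oriented edges already has $D$ as its head, so the IH gives $D \in \De(Y,\g')$. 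For R4, which orients $A \to C$ using the directed path $B \to D \to C$ in the rule's configuration, if $D \to C$ is new then the IH gives $C \in \De(Y,\g')$; otherwise $B \to D$ is new, the IH gives $D \in \De(Y,\g')$, and the (old) edge $D \to C$ in $\g'$ yields $C \in \De(Y,\g')$.

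The main obstacle, though essentially bookkeeping, is the verification that in each rule at least one trigger edge must be new. This reduces to a careful case-by-case check that the full firing configuration of R1--R4 was not already present in $\g$, exploiting only that $\g$ is closed under those same rules and that $\ConstructMaxPDAG$ neither removes existing orientations nor modifies adjacencies.
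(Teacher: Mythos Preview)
Your proof is correct. The paper itself does not provide a proof of this lemma; it is quoted verbatim from \cite{ernestroth2016} (their Lemma~A.7) without argument. Your induction on the sequence of orientations performed by $\ConstructMaxPDAG$, together with the observation that $\g$ is already closed under R1--R4 so that each rule application must involve at least one freshly oriented trigger edge, is the natural and standard way to establish the claim; the rule-by-rule case analysis you give is sound.
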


\begin{lemma} \citep[cf.\ Lemma~A.8 in][]{ernestroth2016}
Let $X$ be a node in a \mpdag{} $\g$. Then there is a \mpdag{} $\g'$ in $[\g]$ such that $X \rightarrow S$ is in $\g'$ for all $S \in \Sib(X,\g)$. 
\label{lemma:no-new-into}
\end{lemma}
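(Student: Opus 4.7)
The plan is to induct on $k := |\Sib(X, \g)|$. The base case $k = 0$ is trivial (take $\g' = \g$). For the inductive step with $k \ge 1$, I would pick any $S \in \Sib(X,\g)$ and set $\g_1 := \ConstructMaxPDAG(\g, \{X \to S\})$. Granted that this call yields a valid \mpdag{} $\g_1 \in [\g]$ (the main obstacle, discussed below), I would apply Lemma~\ref{lemma:alwaydesc} to bound $|\Sib(X,\g_1)| \le k - 1$, invoke the inductive hypothesis on $\g_1$ to obtain a \mpdag{} $\g' \in [\g_1]$ with $X \to S''$ in $\g'$ for every $S'' \in \Sib(X,\g_1)$, and conclude. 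Since $\g' \in [\g_1] \subseteq [\g]$, the final graph lies in $[\g]$; and for every $V \in \Sib(X,\g)$, either $V = S$ (so $X \to V$ is in $\g_1$ and hence in $\g'$), $V$ was oriented as $X \to V$ during the Meek closure producing $\g_1$ (likewise in $\g'$), or $V \in \Sib(X,\g_1)$ (so $X \to V$ is supplied by the inductive hypothesis).

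For the sibling-count bound, Lemma~\ref{lemma:alwaydesc} says that any undirected edge $V - W$ of $\g$ that becomes oriented as $V \to W$ in $\g_1$ satisfies $W \in \De(S, \g_1)$. Because $X \to S$ is in $\g_1$ and $\g_1$ is acyclic, $X$ cannot be a strict descendant of $S$ in $\g_1$: otherwise $S \to \cdots \to X \to S$ would form a directed cycle. Hence $W \neq X$, so no edge is oriented into $X$ by the closure step. Every $V \in \Sib(X, \g) \setminus \{S\}$ therefore either remains a sibling of $X$ in $\g_1$ or becomes a child of $X$ in $\g_1$, while $S$ itself is now a child. In particular, $|\Sib(X,\g_1)| \le k - 1$, and the induction is well-founded.

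The principal obstacle is to verify that $\ConstructMaxPDAG(\g, \{X \to S\})$ yields a valid \mpdag{}, i.e., that orienting $X \to S$ and closing under the rules of Figure~\ref{fig:orientationRules} introduces no directed cycle. Equivalently, the background knowledge $\{X \to S\}$ must be consistent with $\g$, which by \cite{meek1995causal} holds if and only if some DAG in $[\g]$ contains $X \to S$. To establish the existence of such a DAG, I would exploit the maximality of $\g$: since $\g$ is already closed under the rules of Figure~\ref{fig:orientationRules} and the edge $X - S$ remains undirected, neither orientation of $X-S$ is forced by the orientations and unshielded colliders already present, so both $X \to S$ and $S \to X$ occur in some DAG of $[\g]$. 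This is the technically delicate step and parallels the analogous argument in \cite{ernestroth2016} for the cited Lemma~A.8.
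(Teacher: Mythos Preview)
The paper does not supply its own proof of this lemma; it is simply stated with a citation to Lemma~A.8 of \cite{ernestroth2016}. Your inductive argument is correct and is the natural one. The two ingredients you isolate are exactly the right ones: Meek's completeness result for the orientation rules (so that for any undirected edge $X-S$ in a \mpdag{} $\g$ there is a DAG in $[\g]$ containing $X\to S$, whence $\ConstructMaxPDAG(\g,\{X\to S\})$ succeeds), and Lemma~\ref{lemma:alwaydesc} (so that every edge newly oriented by the closure points into a descendant of $S$, hence never into $X$). Your bookkeeping is sound: $\g_1$ has the same adjacencies and unshielded colliders as $\g$ and strictly more directed edges, so $\g_1\in[\g]$, $[\g_1]\subseteq[\g]$, and $\Sib(X,\g_1)\subseteq\Sib(X,\g)\setminus\{S\}$, which makes the induction go through.
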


\section{PROOFS FOR SECTION~\ref{sec:understanding}} \label{sec:proofunderstanding}

\begin{proofof}[Lemma~\ref{lemma:posdir-path}]
Since $\pstar = \langle X=V_0, \dots , V_k=Y \rangle$, $k \geq 1$ is b-non-causal in $\g$, we have $V_i \leftarrow V_j$ in $\g$ for some $i,j$ such that $ 0 \le i < j \le k$. Let $\g[D]$ be an arbitrary $\DAG$ in $[\g]$ and let $p$ be the path corresponding to $\pstar$ in $\g[D]$.
Since $V_i \leftarrow V_j$ in $\g[D]$, $p(V_i, V_j)$ is non-causal from $V_i$ to $V_j$ in $\g[D]$.  Hence, $p$ is b-non-causal in $\g[D]$.
\end{proofof}
\begin{proofof}[Lemma~\ref{lemma:def-stat-posdir}]
One direction is trivial and we only prove that if there is no $V_i \leftarrow V_{i+1}$, for $i \in \{1,\dots, k-1\}$  in $\g$, then $\pstar$ is b-possibly causal in $\g$. Suppose for a contradiction that $\pstar$ is b-non-causal, that is, there is an edge $V_{j} \leftarrow V_{r}$, for $1 \le j < r \le k$, where $r \neq j +1$. 

Since there is no $V_i \leftarrow V_{i+1}$ for any $i \in \{1,\dots, k-1\}$ in $\g$, $V_i - V_{i+1}$ or $V_i \rightarrow V_{i+1}$ is in $\g$ for every $i \in \{1,\dots, k-1\}$.
Let $\g[D]$ be a $\DAG$ in $[\g]$ that contains $V_1 \rightarrow V_2$ and let $p$ be the path corresponding to $\pstar$ in $\g[D]$. 
Since $\pstar$ is of definite status in $\g$ and since no $V_i \leftarrow V_{i+1}$, $i \in \{1,\dots, k-1\}$ is in $\g$, it follows that $\pstar$ contains only definite non-colliders. Then since $V_1 \rightarrow V_2$ is on $p$, $p$ is a causal path in $\g[D]$.
But then $p(V_{j},V_{r})$ together with $V_j \leftarrow V_r$ create a directed cycle in $\g[D]$. 
\end{proofof}
Lemma~\ref{lemma:unshielded-analog} is analogous to Lemma~B.1 in \cite{zhang2008completeness} and the proof follows the same reasoning as well.
\begin{proofof}[Lemma~\ref{lemma:unshielded-analog}] The proof is by induction on the length of $p$. Let $p=\langle X= V_1, \dots , V_k =Y \rangle$.
Suppose that $k=3$. Then either $p$ is unshielded, or there is an edge  $X - Y$ or $X \rightarrow Y$ in $\g$ ($X \leftarrow Y$ is not in $\g$ since $p$ is b-possibly causal). 

For the induction step suppose that the lemma holds for paths of length $n-1$ and let $k=n$. Then either $p$ is unshielded, or there is a node $V_i$, $i >1$ on $p$, such that $V_{i-1} - V_{i+1}$ or $V_{i-1} \rightarrow V_{i+1}$ is in $\g$ ($V_{i-1} \leftarrow V_{i+1}$ is not in $\g$ since $p$ is b-possibly causal ). Then $p' = p(X,V_{i-1}) \oplus \langle V_{i-1}, V_{i+1} \rangle \oplus p(V_{i+1},Y)$ is a b-possibly causal path from $X$ to $Y$ of length $n-1$ and $p'$ is a subsequence of $p$.  
\end{proofof}
The following lemma is analogous to Lemma 7.2 in \cite{maathuis2013generalized} and follows directly from our definitions of b-possibly causal paths and definite status paths.
\begin{lemma}
 Let $p =\langle V_1, \dots , V_k \rangle$ be a b-possibly causal definite status path in a \mpdag{} $\g$. If there is a node $i \in \{1, \dots, n-1\}$ such that $V_i \rightarrow V_{i+1}$, then $p(V_i, V_k)$ is a causal path in $\g$.
\label{lemma:unshielded-edgeanalog}
\end{lemma}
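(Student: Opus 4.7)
The plan is to prove the statement by induction on $j$, showing that $V_j \rightarrow V_{j+1}$ is in $\g$ for every $j$ with $i \le j \le k-1$. The base case $j = i$ is exactly the hypothesis. For the inductive step I assume $V_{j-1} \rightarrow V_j$ is in $\g$ (for some $j$ with $i < j \le k-1$) and aim to show $V_j \rightarrow V_{j+1}$ is in $\g$.

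First I would invoke Lemma~\ref{lemma:def-stat-posdir}: since $p$ is a b-possibly causal definite status path, the edge between $V_j$ and $V_{j+1}$ on $p$ cannot be $V_j \leftarrow V_{j+1}$. So the edge is either the desired $V_j \rightarrow V_{j+1}$, or the undirected $V_j - V_{j+1}$, and my task reduces to ruling out the undirected case.

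Suppose for contradiction that $V_j - V_{j+1}$ is in $\g$. I would split on whether the triple $\langle V_{j-1}, V_j, V_{j+1}\rangle$ is unshielded or shielded. In the unshielded case, Meek rule R1 from Figure~\ref{fig:orientationRules} applied to the induced subgraph $V_{j-1} \rightarrow V_j - V_{j+1}$ forces $V_j \rightarrow V_{j+1}$, since edge orientations in a \mpdag{} are closed under the Meek rules, a contradiction. In the shielded case, $V_{j-1}$ and $V_{j+1}$ are adjacent in $\g$, and I use the fact that $V_j$ is of definite status on $p$: it is clearly not an endpoint, and it is not a collider because the edge $V_j - V_{j+1}$ lacks an arrowhead at $V_j$; hence $V_j$ must be a definite non-collider on $p$. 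However, neither clause of that definition can apply: there is no directed edge out of $V_j$ on $p$ (the two edges on $p$ at $V_j$ are $V_{j-1}\rightarrow V_j$ and $V_j - V_{j+1}$), and the second clause requires the subpath to take the form $V_{j-1} - V_j - V_{j+1}$ with the triple unshielded, both of which fail. This contradiction forces $V_j \rightarrow V_{j+1}$ and closes the induction, so $p(V_i, V_k)$ is causal.

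The main obstacle I anticipate is the shielded-triple case, because it is where the Meek rules do not directly apply; there I must lean on the definite status assumption and the precise statement of the definite non-collider definition to derive a contradiction. Once that subcase is handled, the rest of the argument is a routine application of Lemma~\ref{lemma:def-stat-posdir} and Meek rule R1.
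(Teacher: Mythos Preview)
Your argument is correct and is essentially the route the paper has in mind: the paper does not spell out a proof but simply remarks that the lemma ``follows directly from our definitions of b-possibly causal paths and definite status paths,'' which is precisely what your induction unpacks. One small simplification: your case split is unnecessary, since the definite-status contradiction you give for the shielded case works verbatim in the unshielded case too (with $V_{j-1}\rightarrow V_j - V_{j+1}$, the node $V_j$ is neither a collider, nor a definite non-collider by either clause, nor an endpoint, regardless of whether $V_{j-1}$ and $V_{j+1}$ are adjacent); invoking R1 separately is valid but not needed.
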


\section{PROOFS FOR SECTION \ref{sec:adjust-mpdag} OF THE MAIN TEXT} \label{sec:proofadjust-mpdag}

\subsection{PROOF OF THEOREM~\ref{theorem:gac-pdag}}

\begin{figure}[!tbp]
    \centering
      \begin{tikzpicture}[>=stealth',shorten >=1pt,node distance=3cm, main node/.style={minimum size=0.4cm}]
     \node[main node]         (T34) {\textbf{Theorem \ref{theorem:gac-pdag}}};
    \node[main node,yshift=1cm] (L52) at (T34) {Lemma \ref{lemma:amen-pdag}};
    \node[main node]         (L53)  [left of= T34]          {Lemma \ref{lemma:eqpdag}};
    \node[main node,yshift=-1cm] (L54) at (T34) {Lemma~\ref{lemma:eqb-pdag}};
    \node[main node]            (L56)  [left of= L54]   {Lemma \ref{lemma:2-pdag}};
    \node[main node]            (L57)  [left of= L56,text width=2.2cm,align=center]   {Lemma \ref{lemma:1-pdag}};
    \node[main node,yshift=1cm]            (L58) at (L57)   {Lemma \ref{lemma:non-causal-path-pdag}};
    \draw[->] (L52) edge    (T34);
    \draw[->] (L53) edge    (T34);
    \draw[->] (L54) edge    (T34);
    \draw[->] (L56) edge    (L54);
    \draw[->] (L57) edge    (L56);
    \draw[->] (L58) edge    (L57);
    \end{tikzpicture}
    \caption{Proof structure of Theorem \ref{theorem:gac-pdag}.}
    \label{figproof}
\end{figure}
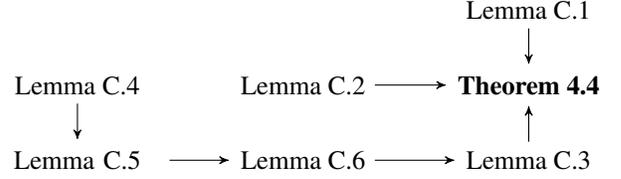

Figure~\ref{figproof} shows how all lemmas fit together to prove Theorem \ref{theorem:gac-pdag}. Theorem~\ref{theorem:gac-pdag} is closely related to Theorem 5 for $\CPDAG$s from \cite{perkovic16}. Since every $\CPDAG$ is a \mpdag{}, all the results presented here subsume the existing  results for $\CPDAG$s. Throughout, we inform the reader when our results and proofs differ from the existing ones for $\CPDAG$s.

\begin{proofof}[Theorem~\ref{theorem:gac-pdag}]
This proof is basically the same as the proof of Theorem 5 from \cite{perkovic16}, except that instead of using Lemmas 8, 9 and 10 from \cite{perkovic16}, we need to use Lemmas ~\ref{lemma:amen-pdag}, \ref{lemma:eqpdag} and \ref{lemma:eqb-pdag}. We give the entire proof for completeness.

 Suppose first that $\mathbf{Z}$ satisfies the b-adjustment criterion relative to $(\mathbf{X,Y})$ in the \mpdag{} $\g$. We need to show that $\mathbf{Z}$ is an adjustment set (Definition~\ref{defadjustmentmpdag}) relative to $(\mathbf{X,Y})$ in every $\DAG$ $\g[D]$ in $[\g]$. By applying Lemmas~\ref{lemma:amen-pdag},~\ref{lemma:eqpdag} and~\ref{lemma:eqb-pdag} in turn, it directly follows that $\mathbf{Z}$ satisfies the b-adjustment criterion relative to $(\mathbf{X,Y})$ in any $\DAG$ $\g[D]$ in $[\g]$.
  Since the b-adjustment criterion reduces to the adjustment criterion \citep{shpitser2012validity,shpitser2012avalidity} in $\DAG$s and the adjustment criterion is sound for $\DAG$s, $\mathbf{Z}$ is an adjustment set relative to $(\mathbf{X,Y})$ in~$\g[D]$.

   To prove the other direction, suppose that $\mathbf{Z}$ does not satisfy the b-adjustment criterion relative to $(\mathbf{X,Y})$ in~$\g$. First, suppose that $\g$ violates \bamen{} relative to $(\mathbf{X},\mathbf{Y})$. Then by Lemma~\ref{lemma:amen-pdag}, there is no adjustment set relative to $(\mathbf{X},\mathbf{Y})$ in~$\g$.
   Otherwise, suppose $\g$ is b-amenable relative to $(\mathbf{X,Y})$. Then $\mathbf{Z}$ violates \bforb{} or \expandafter\ignorespaces\bblck{}. 
   We need to show $\mathbf{Z}$ is not an adjustment set in at least one $\DAG$ $\g[D]$ in $[\g]$.
   Suppose $\mathbf{Z}$ violates \expandafter\ignorespaces\forb. Then by Lemma~\ref{lemma:eqpdag}, it follows that there exists a $\DAG$ $\g[D]$ in $[\g]$ such that $\mathbf{Z}$ does not satisfy the b-adjustment criterion relative to $(\mathbf{X,Y})$ in~$\g[D]$.   Since the b-adjustment criterion reduces to the adjustment criterion \citep{shpitser2012validity,shpitser2012avalidity} in $\DAG$s and the adjustment criterion is complete for $\DAG$s, it follows that $\mathbf{Z}$ is not an adjustment set relative to $(\mathbf{X,Y})$ in~$\g[D]$.
   Otherwise, suppose $\mathbf{Z}$ satisfies \expandafter\ignorespaces\forb, but violates \expandafter\ignorespaces\bblck{}. Then by Lemma~\ref{lemma:eqb-pdag}, it follows that there is a $\DAG$ $\g[D]$ in $[\g]$ such that $\mathbf{Z}$ does not satisfy the b-adjustment criterion relative to $(\mathbf{X,Y})$ in~$\g[D]$.   Since the b-adjustment criterion reduces to the adjustment criterion \citep{shpitser2012validity,shpitser2012avalidity} in $\DAG$s and the adjustment criterion is complete for $\DAG$s, it follows that $\mathbf{Z}$ is not an adjustment set relative to $(\mathbf{X,Y})$ in~$\g[D]$.
\end{proofof}

\begin{lemma}
Let $\mathbf{X}$ and $\mathbf{Y}$ be disjoint node sets in a \mpdag{} $\g$. If $\g$ violates \bamen{} relative to $(\mathbf{X},\mathbf{Y})$, then there is no adjustment set relative to $(\mathbf{X},\mathbf{Y})$ in~$\g$. 
\label{lemma:amen-pdag}
\end{lemma}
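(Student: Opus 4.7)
The plan is to extract a specific proper b-possibly causal path from $\mathbf{X}$ to $\mathbf{Y}$ that witnesses the violation of b-amenability, then build two DAGs in $[\g]$ in which this path plays diametrically opposite roles, and use the sound and complete DAG adjustment criterion of \cite{shpitser2012validity,shpitser2012avalidity} to derive a contradiction from the assumption that an adjustment set exists. Since \bamen{} fails, there is a proper b-possibly causal path $p = \langle X = V_0, V_1, \ldots, V_k = Y \rangle$ from some $X \in \mathbf{X}$ to some $Y \in \mathbf{Y}$ whose first edge, by Definition~\ref{def:posdir-path}, must be of the form $X - V_1$. I would take $p$ of minimum length among such paths; this forces every triple $\langle V_{i-1}, V_i, V_{i+1} \rangle$ with $i \ge 2$ to be unshielded, because any chord would yield a strictly shorter proper b-possibly causal path starting with the same undirected edge $X - V_1$, and a short case distinction would handle a possible chord at the first triple $\langle X, V_1, V_2 \rangle$.

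Next I would apply Lemma~\ref{lemma:no-new-into} to $X$ to obtain a \mpdag{} in $[\g]$ in which every sibling edge at $X$ is oriented out of $X$, in particular $X \to V_1$, and extend it to a DAG $\g[D]_1 \in [\g]$. An induction along $p$, using that $\g$ and $\g[D]_1$ share the same unshielded colliders, would show that the corresponding path is causal in $\g[D]_1$: having $V_{i-1} \to V_i$ in $\g[D]_1$ together with $V_{i+1} \to V_i$ would make $\langle V_{i-1}, V_i, V_{i+1} \rangle$ an unshielded collider in $\g[D]_1$ and hence in $\g$, contradicting the b-possibly causal status of $p$ in $\g$. I would then apply Lemma~\ref{lemma:no-new-into} to $V_1$ to produce a DAG $\g[D]_2 \in [\g]$ in which $V_1 \to X$; the same shared-colliders argument would rule out any collider $V_i$ ($1 \le i \le k-1$) on the corresponding path in $\g[D]_2$, since a collider would be unshielded and hence present in $\g$, contradicting $p$ being b-possibly causal there.

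To conclude, I would argue by contradiction: suppose $\mathbf{Z}$ is an adjustment set relative to $(\mathbf{X}, \mathbf{Y})$ in $\g$. By Definition~\ref{defadjustmentmpdag} applied to densities consistent with $\g$, $\mathbf{Z}$ is an adjustment set in every DAG in $[\g]$, and by the completeness of the DAG adjustment criterion this forces that in $\g[D]_1$ no non-endpoint of the proper causal path $p$ may lie in $\mathbf{Z}$, while in $\g[D]_2$ the proper non-causal path $p$ must be blocked by $\mathbf{Z}$. But in $\g[D]_2$, $p$ has no colliders and none of its non-endpoints lies in $\mathbf{Z}$, so $p$ is d-connecting given $\mathbf{Z}$, a contradiction, and thus no adjustment set exists. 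The hard part will be the minimality-plus-shielding analysis of $p$ (especially the possibly shielded first triple) together with the careful verification that the orientations supplied by Lemma~\ref{lemma:no-new-into} really extend to DAGs in $[\g]$ in which $p$ has the prescribed causal, respectively collider-free, structure; both steps rest crucially on the fact that every DAG in $[\g]$ shares its unshielded colliders with $\g$.
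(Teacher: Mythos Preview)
Your approach coincides with the paper's proof when the minimal path $p$ is of definite status: then your $\g[D]_1$ (obtained from Lemma~\ref{lemma:no-new-into} at $X$) indeed makes $p$ causal and your $\g[D]_2$ (from Lemma~\ref{lemma:no-new-into} at $V_1$) makes it a collider-free non-causal path, and the contradiction goes through exactly as you describe.

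The gap is the ``short case distinction'' at the first triple. This is not short; it is precisely where the \mpdag{} case departs from the $\CPDAG$ case, and the paper explicitly flags this as the point of divergence. When $\langle X,V_1,V_2\rangle$ is shielded, minimality together with b-possible causality forces $X\to V_2$ in $\g$, while $V_1 - V_2$ may remain undirected. In your $\g[D]_1$ the edge $V_1 - V_2$ can then legally be oriented as $V_2\to V_1$: the resulting collider $X\to V_1\leftarrow V_2$ is \emph{shielded}, so it is not ruled out by the ``same unshielded colliders'' argument on which your induction relies. Hence $p$ need not be causal in $\g[D]_1$, and you lose the conclusion that $V_1,\dots,V_{k-1}$ lie in the forbidden set there --- which is exactly what you need to feed into the $\g[D]_2$ step.

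The paper resolves this case with a different pair of DAGs and a different contradiction. It applies Lemma~\ref{lemma:no-new-into} at $V_1$ (obtaining $X\leftarrow V_1\to V_2\to\cdots\to Y$, which together with the proper causal path $X\to V_2\to\cdots\to Y$ forces any valid $\mathbf{Z}$ to contain $V_1$ and none of $V_2,\dots,V_{k-1}$), and then at $V_2$ (obtaining, via rule $R2$, $X\to V_1\leftarrow V_2\to\cdots\to Y$, on which such a $\mathbf{Z}$ opens the collider $V_1$ and blocks no non-collider, so b-blocking fails). So the shielded case needs a genuinely separate construction; you should spell it out rather than defer it.
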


\begin{proof} 
This lemma is related to Lemma~8 from \cite{perkovic16}. The proofs are not the same due to the differences between $\CPDAG$s and \mpdag{}s. We will point out where the two proofs diverge.

   Suppose that $\g$ violates \bamen{} relative to $(\mathbf{X},\mathbf{Y})$. We will show that in this case one can find $\DAG$s $\g[D]_1$ and $\g[D]_2$ in $[\g]$, such that there is no set that satisfies the b-adjustment criterion relative to $(\mathbf{X,Y})$ in both $\g[D]_1$ and $\g[D]_2$.
 
 Since $\g$ is not b-amenable relative to $(\mathbf{X,Y})$, there is a proper b-possibly causal path $\pstar[q]$ from a node $X \in \mathbf{X}$ to a node $Y \in \mathbf{Y}$ that starts with a undirected edge.
   Let $\pstar[q'] = \langle X= V_0,V_1,\dots,V_k=Y\rangle, k \ge 1$ (where $V_1 = Y$ is allowed) be a shortest subsequence of $\pstar[q]$ such that $\pstar[q']$ is also a proper b-possibly causal path that starts with a undirected edge in~$\g$. 
  
   Suppose first that $\pstar[q']$ is of definite status in $\g$. Let $\g[D]_1$ be a $\DAG$ in $[\g]$ that contains $X \rightarrow V_1$  and let $\g[D]_2$ be a $\DAG$ in $[\g]$ that has no additional edges into $V_1$ as compared to $\g$ (Lemma~\ref{lemma:no-new-into}). Then the path corresponding to $\pstar[q']$ in $\g[D]_1$ is causal, whereas the path corresponding to $\pstar[q']$ in $\g[D]_2$ is b-non-causal and contains no colliders. Hence, no set can satisfy both \bforb{} in $\g[D]_1$ and \bblck{} in $\g[D]_2$  relative to $(\mathbf{X,Y})$.
   
Otherwise, $\pstar[q']$ is not of definite status in $\g$. In the proof of Lemma 8 from \cite{perkovic16}, the authors show that if $\pstar[q']$ is not of definite status, this leads to a contradiction. However, $\pstar[q']$  can be of non-definite status in $\g$ and this is where the proofs diverge. 

By Lemma~\ref{lemma:unshielded-analog}, $\pstar[q'](V_1,Y)$ must be unshielded and hence, of definite status in $\g$, since otherwise we can choose a shorter b-possibly causal path.
Since $\pstar[q']$ is not of definite status and $\pstar[q'](V_1,Y)$ is of definite status, it follows that $V_1$ is not of definite status on $\pstar[q']$. Then $\langle X, V_1, V_2 \rangle$ is a shielded triple. By choice of $\pstar[q']$, $X - V_1$ is in $\g$. Additionally, since $V_1$ is not of definite status on $\pstar[q']$, $V_1 - V_2$ must be in $\g$.
This implies that $X - V_1 -V_2$ is in $\g$ and $X \in \Adj(V_2,\g)$. Moreover, we must have $X \to V_2$, since $X - V_2$ contradicts the choice of $\pstar[q']$, and $X \leftarrow V_2$ contradicts that $\pstar[q']$ is b-possibly causal in $\g$. 

Let $\g[D]_1$ be a $\DAG$ in $\g$ that has no additional edges into $V_1$ as compared to $\g$ (Lemma~\ref{lemma:no-new-into}). Let $q_1$ be the path corresponding to $\pstar[q']$ in $\g[D]_1$. Then $q_1$ is of the form $X \leftarrow V_1 \rightarrow V_2 \rightarrow \dots \rightarrow Y$ in $\g[D]_1$. Since $X \rightarrow V_2 \rightarrow \dots \rightarrow Y$ is a proper causal path in $\g[D]_1$, $\{V_2, \dots, V_{k-1}\} \subseteq \bfb{\g[D]_{1}}$. Hence, any set that satisfies \bblck{} and \bforb{} relative to $(\mathbf{X,Y})$ in $\g[D]_{1}$ must contain $V_1$ and not $\{V_2,\dots,V_{k-1}$. 

Let $\g[D]_2$ be a $\DAG$ in $\g$ that has no additional edges into $V_2$ as compared to $\g$ (Lemma~\ref{lemma:no-new-into}). Let $q_2$ be the path corresponding to $\pstar[q']$ in $\g[D]_2$. Since $X \rightarrow V_2 \rightarrow V_1$ is in $\g[D]_2$, $X \rightarrow V_1$ is in $\g[D]_2$ (Rule $R2$). Then $q_2$ is of the form $ X \rightarrow V_1 \leftarrow V_2 \rightarrow \dots \rightarrow Y$ in $\g[D]_2$. Hence, any set that satisfies \bforb{} and \bblck{} {} relative to $(\mathbf{X,Y})$ in $\g[D]_{1}$, violates \bblck{} {} relative to $(\mathbf{X,Y})$ in $\g[D]_{2}$.
\end{proof}

\begin{lemma}
  Let $\mathbf{X}$ and $\mathbf{Y}$ be disjoint node sets in a \mpdag{} $\g$. If $\g$ is b-amenable relative to $(\mathbf{X},\mathbf{Y})$, then the following statements are equivalent:
\begin{enumerate}[label = (\roman*)]
\item\label{l:eqa1-pdag} $\mathbf{Z}$ satisfies \bforb{} (see Definition~\ref{def:gac-pdag}) relative to $(\mathbf{X},\mathbf{Y})$ in~$\g$.
\item\label{l:eqa2-pdag}  $\mathbf{Z}$ satisfies \bforb{} relative to $(\mathbf{X},\mathbf{Y})$ in every $\DAG$ in $[\g]$.
\end{enumerate}
   \label{lemma:eqpdag}
\end{lemma}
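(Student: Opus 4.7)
The plan is to prove the equivalence by establishing the set identity $\bfb{\g} = \bigcup_{\g[D] \in [\g]} \fb{\g[D]}$; the two inclusions correspond to the two directions.

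For the forward direction (i) $\Rightarrow$ (ii): fix any DAG $\g[D] \in [\g]$ and any $V \in \fb{\g[D]}$, witnessed by some $W \notin \mathbf{X}$ on a proper causal path $\pstar$ from $\mathbf{X}$ to $\mathbf{Y}$ in $\g[D]$ with $V \in \De(W,\g[D])$. The corresponding path $p$ in $\g$ has the same node sequence; each on-path edge of $p$ is either directed as on $\pstar$ or undirected in $\g$ (since directed edges of $\g$ are inherited identically by $\g[D]$), so no on-path backward arrow appears. Any off-path chord $V_i \leftarrow V_j$ with $i<j$ in $\g$ would lift to $\g[D]$ and, combined with the causal subpath $\pstar(V_i,V_j)$, would yield a directed cycle, contradicting acyclicity of $\g[D]$. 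Hence $p$ is a proper b-possibly causal path through $W$ in $\g$; an identical argument applied to a causal path from $W$ to $V$ in $\g[D]$ yields $V \in \bPossDe(W,\g)$, so $V \in \bfb{\g}$.

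For the reverse direction (ii) $\Rightarrow$ (i): given $V \in \bfb{\g}$ witnessed by $W \notin \mathbf{X}$ on a proper b-possibly causal path $p$ from $X \in \mathbf{X}$ to $Y \in \mathbf{Y}$ with $V \in \bPossDe(W,\g)$, I construct a DAG $\g[D] \in [\g]$ in which $V \in \fb{\g[D]}$. I proceed in four steps. (a) Apply Lemma~\ref{lemma:unshielded-analog} to $p(X,W)$, $p(W,Y)$, and an arbitrary b-possibly causal path from $W$ to $V$ to obtain unshielded (hence definite-status) b-possibly causal paths $q_1$, $q_2$, and $r$. (b) From the walk $q_1 \oplus q_2$ extract a proper b-possibly causal simple path from $\mathbf{X}$ to $\mathbf{Y}$ that still contains $W$; apply b-amenability to conclude this path begins with a directed edge, and then Lemma~\ref{lemma:unshielded-edgeanalog} to conclude it is in fact causal in $\g$. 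Thus $W$ lies on a proper causal path from $\mathbf{X}$ to $\mathbf{Y}$ in $\g$, a property preserved in every DAG in $[\g]$. (c) Apply Lemma~\ref{lemma:no-new-into} at $W$ to obtain $\g' \in [\g]$ in which $W \to S$ for every $S \in \Sib(W,\g)$; the first edges of $q_2$ and $r$ are therefore directed out of $W$ in $\g'$, and Meek's rule $R1$ (together with the unshielded structure of $q_2$ and $r$) propagates these orientations so that $q_2$ and $r$ become causal in $\g'$, by Lemma~\ref{lemma:unshielded-edgeanalog}. (d) Any DAG $\g[D] \in [\g']$ inherits both the causal path through $W$ from step (b) and $V \in \De(W,\g[D])$ from step (c), so $V \in \fb{\g[D]}$.

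The principal obstacle lies in step (b): the walk $q_1 \oplus q_2$ may revisit vertices, and any shortcut removing a repetition must not eliminate $W$. I expect to handle this by selecting a minimum-length simple b-possibly causal sub-path of $q_1 \oplus q_2$ that still contains $W$, and using definite-status and acyclicity arguments to argue that on this sub-path Lemma~\ref{lemma:unshielded-edgeanalog} still applies at each internal node (additional case analysis in the spirit of the proof of Lemma~\ref{lemma:amen-pdag} may be needed if the triple at $W$ turns out to be shielded). A secondary concern in step (c) is that the outgoing orientations added at $W$ could clash with the incoming orientations on the causal path from $\mathbf{X}$ to $W$ established in step (b); this is avoided because the new orientations strictly leave $W$, whereas the causal path from $\mathbf{X}$ to $W$ only constrains edges upstream of or incoming to $W$.
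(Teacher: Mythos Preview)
Your overall strategy coincides with the paper's: show $\bfb{\g[D]}\subseteq\bfb{\g}$ for the forward direction, and for the reverse direction pass to unshielded subsequences, invoke b-amenability, and then use Lemma~\ref{lemma:no-new-into} at $W$ to find a DAG in which the remaining paths become causal. Two points deserve correction.

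First, your ``principal obstacle'' is not an obstacle at all. Since $q_1$ is a subsequence of $p(X,W)$ and $q_2$ is a subsequence of $p(W,Y)$, and these two subpaths of the simple path $p$ share only the node $W$, the concatenation $q_1\oplus q_2$ is automatically a simple path. Moreover, $q_1\oplus q_2$ is a subsequence of $p$ itself, so it inherits properness (only its first node lies in $\mathbf{X}$) and b-possible causality (any backward edge $V_i\leftarrow V_j$ between nodes of $q_1\oplus q_2$ would already contradict $p$ being b-possibly causal). No extraction or shortcut argument is needed; this is exactly how the paper proceeds.

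Second, the conclusion you aim for in step~(b) is too strong and is where an actual gap sits. You cannot conclude that $q_1\oplus q_2$ is \emph{causal in $\g$}: Lemma~\ref{lemma:unshielded-edgeanalog} requires a definite-status path, and while $q_1$ and $q_2$ are each unshielded, the triple at $W$ in $q_1\oplus q_2$ may well be shielded, so $W$ need not be of definite status. In general $q_2$ will retain undirected edges in $\g$. The paper sidesteps this entirely: it applies b-amenability to $q_1\oplus q_2$ only to conclude that the \emph{first} edge is directed, then applies Lemma~\ref{lemma:unshielded-edgeanalog} to $q_1$ alone (which \emph{is} unshielded and hence of definite status) to get that $q_1$ is causal in $\g$. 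Causality of $q_2$ and $r$ is obtained only in the specific DAG $\g[D]$ furnished by Lemma~\ref{lemma:no-new-into}, exactly as in your step~(c). In other words, step~(b) should assert only that $q_1$ is causal in $\g$; the proper causal path through $W$ is then $q_1\oplus q_2$ in $\g[D]$, not in $\g$, and step~(c) already delivers this. With that adjustment your argument is the paper's argument.
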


\begin{proof}
This lemma is related to Lemma~10 from \cite{perkovic16}.

 By Lemma~\ref{lemma:posdir-path} $\bfb{\g[D]} \subseteq \bfb{\g}$ hence, \ref{l:eqa1-pdag}$\Rightarrow$\ref{l:eqa2-pdag} holds. We now prove $\neg$~\ref{l:eqa1-pdag}$\Rightarrow\neg$~\ref{l:eqa2-pdag}.

Let $V \in \mathbf{Z} \cap \bfb{\g}$. Then $V \in \bPossDe(W,\g)$ for some $W= V_i$ on a proper b-possibly causal path $p = \langle X=V_0, V_1,\dots, V_k=Y \rangle, 1 \le i \le k$ from $X \in \mathbf{X}$ to $Y \in \mathbf{Y}$. Let $q=p(X,W)$, $r=p(W,Y)$ and let $s$ be a b-possibly causal path from $W$ to $V$, where $r$ and $s$ are allowed to be of zero length (if $W=Y$ and/or $W=V$). 

   Let $q'$, $r'$ and $s'$ be subsequences of $q$, $r$ and $s$ that form unshielded b-possibly causal paths, with $r'$ and $s'$ possibly of zero length (Lemma~\ref{lemma:unshielded-analog}). Then $q'$ must start with a directed edge, otherwise $q' \oplus r'$ would violate \bamen{}.
 Hence, $q'$ must be causal in~$\g$ (Lemma~\ref{lemma:unshielded-edgeanalog}). 

   Let $\g[D]$ be a $\DAG$ in $[\g]$ that has no additional edges into $W$ as compared to $\g$ (Lemma~\ref{lemma:no-new-into}). Then since $r'$ and $s'$ are unshielded and b-possibly causal, the paths corresponding to $r'$ and $s'$ in $\g[D]$ are causal (or of zero length). Hence,  $V \in \bfb{\g[D]}$, so that $\mathbf{Z} \cap \bfb{\g[D]} \neq \emptyset$.
\end{proof}
The final lemma needed to prove Theorem~\ref{theorem:gac-pdag} is Lemma~\ref{lemma:eqb-pdag}. This lemma relies on Lemma~\ref{lemma:2-pdag}, which depends on Lemma~\ref{lemma:non-causal-path-pdag}, \ref{lemma:1-pdag} and \ref{lemma:2-pdag}. We first give Lemma~\ref{lemma:eqb-pdag} with its proof. This is followed by Lemmas~\ref{lemma:non-causal-path-pdag}, \ref{lemma:1-pdag} and \ref{lemma:2-pdag} with their proofs. 

\begin{lemma}
    Let $\mathbf{X}$ and $\mathbf{Y}$ be disjoint node sets in a \mpdag{} $\g$. If $\g$ is b-amenable relative to $(\mathbf{X},\mathbf{Y})$ and $\mathbf{Z}$ satisfies \bforb{} relative to $(\mathbf{X},\mathbf{Y})$ in $\g$, then the following statements are equivalent:
   \begin{enumerate}[label = (\roman*)]
\item\label{l:eqb1-pdag} $\mathbf{Z}$ satisfies \bblck{} (see Definition~\ref{def:gac-pdag}) relative to $(\mathbf{X},\mathbf{Y})$ in~$\g$.
\item\label{l:eqb2-pdag}  $\mathbf{Z}$ satisfies \bblck{} relative to $(\mathbf{X},\mathbf{Y})$ in every $\DAG$ in $[\g]$.
\item\label{l:eqb3-pdag} $\mathbf{Z}$ satisfies \bblck{} relative to $(\mathbf{X},\mathbf{Y})$ in a $\DAG$ $\g[D]$ in $[\g]$.
\end{enumerate}
   \label{lemma:eqb-pdag}
\end{lemma}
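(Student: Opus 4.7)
The plan is to prove the chain \ref{l:eqb1-pdag}$\Rightarrow$\ref{l:eqb2-pdag}$\Rightarrow$\ref{l:eqb3-pdag}$\Rightarrow$\ref{l:eqb1-pdag}. The middle implication \ref{l:eqb2-pdag}$\Rightarrow$\ref{l:eqb3-pdag} is immediate, since $[\g]$ contains at least one $\DAG$, so any property that holds in every $\DAG$ in $[\g]$ holds in some $\DAG$ in $[\g]$.

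For \ref{l:eqb1-pdag}$\Rightarrow$\ref{l:eqb2-pdag} I would argue by contraposition. Fix an arbitrary $\DAG$ $\g[D] \in [\g]$ and suppose there exists a proper non-causal path $p$ from $\mathbf{X}$ to $\mathbf{Y}$ in $\g[D]$ that is d-connecting given $\mathbf{Z}$. The goal is to translate $p$ into a proper b-non-causal definite status path in $\g$ that is still d-connecting given $\mathbf{Z}$, which would contradict \ref{l:eqb1-pdag}. Under the hypotheses of the lemma (b-amenability of $\g$ relative to $(\mathbf{X},\mathbf{Y})$ and $\mathbf{Z}$ satisfying \bforb{}), this translation from paths in a $\DAG$ $\g[D] \in [\g]$ to paths in $\g$ is exactly what Lemma~\ref{lemma:2-pdag}, together with the preparatory Lemmas~\ref{lemma:non-causal-path-pdag} and~\ref{lemma:1-pdag}, delivers.

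The remaining implication \ref{l:eqb3-pdag}$\Rightarrow$\ref{l:eqb1-pdag} is proved symmetrically. Again by contraposition, start from a proper b-non-causal definite status path in $\g$ from $\mathbf{X}$ to $\mathbf{Y}$ that is d-connecting given $\mathbf{Z}$, and apply the converse direction of Lemma~\ref{lemma:2-pdag} to the fixed $\DAG$ $\g[D]$ of \ref{l:eqb3-pdag} in order to obtain an open proper non-causal path in $\g[D]$, which contradicts \ref{l:eqb3-pdag}.

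The hard part is thus hidden entirely inside Lemma~\ref{lemma:2-pdag}: transferring d-connection between $\g$ and a $\DAG$ in $[\g]$ when $\g$ can contain partially directed cycles, a phenomenon absent from the $\CPDAG$ setting. The natural strategy is an induction on the distance-from-$\mathbf{Z}$ of the chosen path (Definition~\ref{def:distance from Z}); at each inductive step one invokes Lemmas~\ref{lemma:non-causal-path-pdag} and~\ref{lemma:1-pdag} to modify the path near a collider while preserving its endpoints, its definite status, and its d-connection given $\mathbf{Z}$. \Bamen{} prevents the first edge out of $\mathbf{X}$ from turning the translated path into a b-possibly causal one, and \bforb{} prevents the causal subpaths from colliders to $\mathbf{Z}$ from being intercepted by nodes of $\mathbf{Z}$, so that the construction neither destroys b-non-causality of the translated path nor breaks its d-connection given $\mathbf{Z}$.
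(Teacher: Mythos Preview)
Your overall cycle \ref{l:eqb1-pdag}$\Rightarrow$\ref{l:eqb2-pdag}$\Rightarrow$\ref{l:eqb3-pdag}$\Rightarrow$\ref{l:eqb1-pdag} and the use of Lemma~\ref{lemma:2-pdag} for the hard step $\neg$\ref{l:eqb2-pdag}$\Rightarrow\neg$\ref{l:eqb1-pdag} match the paper exactly. Two points deserve correction, though.

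First, the step $\neg$\ref{l:eqb1-pdag}$\Rightarrow\neg$\ref{l:eqb3-pdag} is \emph{not} symmetric to the other direction, and there is no ``converse direction of Lemma~\ref{lemma:2-pdag}'' to invoke. Lemma~\ref{lemma:2-pdag} only goes from a path in a $\DAG$ to a path in $\g$; the reverse transfer is elementary and handled directly. If $p$ is a proper b-non-causal definite status path in $\g$ that is d-connecting given $\mathbf{Z}$, then the corresponding path in any $\g[D]\in[\g]$ is proper (same node sequence), non-causal (Lemma~\ref{lemma:posdir-path}), and d-connecting given $\mathbf{Z}$: every collider on $p$ is already a collider in $\g$ and hence in $\g[D]$, with $\De(C,\g)\subseteq\De(C,\g[D])$; every definite non-collider on $p$ remains a non-collider in $\g[D]$ because directed edges and unshielded colliders are preserved. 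That is the whole argument---no extremal choice, no auxiliary lemmas beyond Lemma~\ref{lemma:posdir-path}.

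Second, your description of the proof of Lemma~\ref{lemma:2-pdag} as an ``induction on the distance-from-$\mathbf{Z}$'' is not how the paper proceeds. The paper makes a single extremal choice: among the \emph{shortest} proper non-causal d-connecting paths in $\g[D]$, pick one with minimal $\distancefrom{\mathbf{Z}}$. Lemma~\ref{lemma:1-pdag} (using Lemma~\ref{lemma:non-causal-path-pdag}) shows the corresponding path in $\g$ is already of definite status, and Lemma~\ref{lemma:2-pdag} then argues by contradiction that every collider on it is an ancestor of $\mathbf{Z}$ in $\g$. There is no inductive modification of the path; the minimality is used once to rule out competing shorter or closer-to-$\mathbf{Z}$ alternatives. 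Your sketch of what b-amenability and the b-forbidden set condition buy is broadly right, but the mechanism is a minimal-counterexample argument, not an induction.
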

\begin{proofof}[Lemma~\ref{lemma:eqb-pdag}]
This lemma is related to Lemma~10 from \cite{perkovic16},but instead of using Lemma 52 from \cite{perkovic16}, we use Lemma~\ref{lemma:2-pdag}.

   To prove $\neg$~\ref{l:eqb1-pdag}$\Rightarrow\neg$~\ref{l:eqb3-pdag} let $p$ be a proper b-non-causal definite status path from $\mathbf{X}$ to $\mathbf{Y}$ that is d-connecting given $\mathbf{Z}$ in~$\g$. The path corresponding to $p$ in any $\DAG$ $\g[D]$ in $[\g]$ is proper, non-causal (Lemma~\ref{lemma:posdir-path}) and d-connecting given $\mathbf{Z}$. 
   
The implication $\neg$~\ref{l:eqb3-pdag}$\Rightarrow\neg$~\ref{l:eqb2-pdag} trivially holds, so it is only left to prove that $\neg$~\ref{l:eqb2-pdag}$\Rightarrow\neg$~\ref{l:eqb1-pdag}.
   Thus, assume there is a $\DAG$ $\g[D]$ in $[\g]$ such that a proper b-non-causal path from $\mathbf{X}$ to $\mathbf{Y}$ in~$\g[D]$ is d-connecting given $\mathbf{Z}$.
  Among the shortest proper non-causal paths from $\mathbf{X}$ to $\mathbf{Y}$ that are d-connecting given $\mathbf{Z}$ in~$\g[D]$, choose a path $p$ with a minimal $\distancefrom{\mathbf{Z}}$ (Definition~\ref{def:distance from Z}). Let $\pstar$ in $\g$ be the path corresponding to $p$ in $\g[D]$. By Lemma~\ref{lemma:2-pdag}, $\pstar$ is a proper b-non-causal definite status path from $\mathbf{X}$ to $\mathbf{Y}$ that is d-connecting given $\mathbf{Z}$. 
\end{proofof}
\begin{lemma}
Let $\mathbf{X,Y}$ and $\mathbf{Z}$ be pairwise disjoint node sets in a \mpdag{} $\g$. Let $\mathbf{Z}$ satisfy \bamen{} and \bforb{} relative to $(\mathbf{X},\mathbf{Y})$ in~$\g$. Let $\g[D]$ be a $\DAG$ in $[\g]$ and let $p = \langle X = V_{0}, V_{1},\dots ,V_{k} = Y \rangle, k \ge 1,$ be a proper non-causal path from $X \in \mathbf{X}$ to $Y \in \mathbf{Y}$ that is d-connecting given $\mathbf{Z}$ in~$\g[D]$. Let $\pstar$ in~$\g$ denote the path corresponding to $p$. Then:
\begin{enumerate}[label=(\roman*)]
\item\label{l:ncp:pdag} Let $i,j \in \mathbb{N}, 0 < i < j \leq k,$ such that there is an edge $\langle V_i, V_j \rangle$ in~$\g$. The path $ \pstar(X,V_i) \oplus \langle V_i, V_j \rangle \oplus \pstar(V_j,Y)$ ($\pstar(V_j, Y)$ is possibly of zero length) is a proper b-non-causal path in~$\g$. For $j=i+1$, this implies that $\pstar$ is a proper b-non-causal path.
\item\label{l:ncp:b-pdag-new} If $X \leftarrow V_1$ and $V_1 \rightarrow V_2$ are not in $\g$ and there is an edge $\langle X,V_2 \rangle$ in $\g$, then $\pstar[q]=\langle X,V_2 \rangle \oplus \pstar(V_2,Y)$, ($\pstar(V_2, Y)$ is possibly of zero length) is a proper b-non-causal path in~$\g$.
\end{enumerate}
\label{lemma:non-causal-path-pdag}
\end{lemma}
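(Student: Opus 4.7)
The plan for both (i) and (ii) is to first observe that properness of each new path is immediate: its node set is a subset of the nodes of $p$, and $p$ is proper, so only $X = V_0$ lies in $\mathbf{X}$. The substance of the proof is b-non-causality, which I would establish by contradiction.

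For (i), suppose $\pstar[q] = \pstar(X, V_i) \oplus \langle V_i, V_j\rangle \oplus \pstar(V_j, Y)$ is b-possibly causal in $\g$. By Lemma~\ref{lemma:unshielded-analog} I extract an unshielded (hence of definite status) b-possibly causal proper subsequence $\pstar[q]'$ from $X$ to $Y$. The b-amenability of $\g$ forces $\pstar[q]'$ to begin with a directed edge out of $X$, and then Lemma~\ref{lemma:unshielded-edgeanalog} makes $\pstar[q]'$ an entirely directed causal path in $\g$ (hence also in $\g[D]$). Every non-$X$ node on $\pstar[q]'$ therefore lies in $\bfb{\g}$ and, by \bforb, is not in $\mathbf{Z}$. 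Next I use that $p$ is non-causal in $\g[D]$: it contains a backward edge $V_\ell \leftarrow V_{\ell+1}$. If this edge is also $V_\ell \leftarrow V_{\ell+1}$ in $\g$ and both endpoints lie on $\pstar[q]$, then $\pstar[q]$ is b-non-causal by definition, contradicting the assumption. Otherwise either the edge is undirected in $\g$ (in which case the directed path $\pstar[q]'$ cannot traverse it, so one of $V_\ell, V_{\ell+1}$ is skipped), or one endpoint is interior to the shortcut interval $(V_i, V_j)$ and hence not on $\pstar[q]$. In any such subcase I plan to produce a collider on $p$ that lies on a proper b-possibly causal path from $X$ to $Y$ in $\g$, formed by appropriate detours through $\pstar[q]'$ or the shortcut edge $\langle V_i, V_j\rangle$; this forces all of its $\g[D]$-descendants into $\bfb{\g}$. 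The d-connecting property of $p$ given $\mathbf{Z}$ then forces $\mathbf{Z}\cap \bfb{\g}\neq \emptyset$, contradicting \bforb.

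For (ii), the edge $\langle X,V_2\rangle$ in $\g$ cannot be $X-V_2$ (otherwise the one-edge path $\langle X,V_2\rangle$ would be a proper b-possibly causal path violating b-amenability), so it is $X\to V_2$ or $X\leftarrow V_2$. The case $X\leftarrow V_2$ is immediate: the new path begins with a backward arrow and is b-non-causal. In the case $X\to V_2$, the hypotheses $X\leftarrow V_1 \notin \g$ and $V_1\to V_2 \notin \g$ restrict the orientations of $XV_1$ and $V_1V_2$ in $\g$, and together with the shielded triple at $V_1$ (all three of $XV_1$, $V_1V_2$, $XV_2$ are edges in $\g$) and Meek's rule $R1$, the same contradiction scheme as in (i) applies: $V_1$ is forced to be a collider on $p$ in $\g[D]$ whose descendants are all in $\bfb{\g}$, violating \bforb.

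The main obstacle I anticipate is the subcase of (i) where the backward edge of $p$ is strictly interior to the shortcut interval $(V_i,V_j)$: neither of its endpoints lies on $\pstar[q]$, so the contradiction cannot be obtained directly from that edge. The delicate step is to locate a suitable collider on $p$ and relate its $\g[D]$-descendants to $\bfb{\g}$ by piecing together the shortcut edge $\langle V_i,V_j\rangle$ and the directed path $\pstar[q]'$ in $\g$.
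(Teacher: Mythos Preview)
Your overall framework (contradiction via the b-forbidden set and d-connection) is right, but you are working much harder than necessary, and the ``main obstacle'' you flag is an artifact of your detour through the unshielded subsequence $\pstar[q]'$ and the position of the backward edge.

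For \ref{l:ncp:pdag}, the paper's argument is far simpler. Since $i>0$, the assumed b-possibly causal path $\pstar[q]$ begins with the edge $\langle X,V_1\rangle$, so b-amenability forces $X\to V_1$ in $\g$ (and hence in $\g[D]$). Now work entirely in $\g[D]$: $p$ starts with $X\to V_1$ and is non-causal, so it has a first collider $V_r$, and $p(X,V_r)=X\to V_1\to\cdots\to V_r$ is causal in $\g[D]$. Consequently $V_r$ and all of $\De(V_r,\g[D])$ lie in $\bPossDe(V_1,\g)\subseteq\bfb{\g}$, because $V_1$ itself sits on the proper b-possibly causal path $\pstar[q]$. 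The d-connecting condition then gives $\mathbf{Z}\cap\De(V_r,\g[D])\neq\emptyset$, contradicting \bforb. No unshielded subsequence, no localization of the backward edge, no case split on the shortcut interval~--- the ``delicate step'' you anticipate simply does not occur.

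For \ref{l:ncp:b-pdag-new}, your opening move is not quite right: the single edge $\langle X,V_2\rangle$ is not a path from $\mathbf{X}$ to $\mathbf{Y}$, so $X-V_2$ by itself does not violate b-amenability. What does work is to assume $\pstar[q]$ is b-possibly causal; \emph{then} b-amenability forces $X\to V_2$. From here the paper splits on the edge $V_1V_2$ (only $V_1-V_2$ or $V_1\leftarrow V_2$ are possible). If $V_1\leftarrow V_2$, then $X\to V_2\to V_1$ and rule~$R2$ (not $R1$) give $X\to V_1$, so $V_1$ is a collider on $p$; since $V_1\in\bPossDe(V_2,\g)$ with $V_2$ on $\pstar[q]$, its $\g[D]$-descendants lie in $\bfb{\g}$ and d-connection yields the contradiction. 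If $V_1-V_2$, the paper does \emph{not} argue via a collider at $V_1$: instead it uses part~\ref{l:ncp:pdag} to get that $\pstar$ is b-non-causal, then observes that with $X\to V_2$ in $\g$ and $\pstar(V_2,Y)$ b-possibly causal, the only way $\pstar$ can be b-non-causal is $X\leftarrow V_1$, contradicting the hypothesis. Your sketch (``$V_1$ is forced to be a collider on $p$'') does not cover this second case.
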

\begin{proof}
This lemma is related to Lemma 50 from \cite{perkovic16}. In particular, \ref{l:ncp:pdag} in Lemma~\ref{lemma:non-causal-path-pdag} and (i) in Lemma 50 from \cite{perkovic16} and their proofs match. The result in \ref{l:ncp:b-pdag-new} differs in both statement and proof from (ii)-(iii) in Lemma 50 from \cite{perkovic16}.

All paths considered are proper as they are subsequences of $\pstar$, which corresponds to $p$.

\ref{l:ncp:pdag} We use proof by contradiction. Thus, suppose that $\pstar[q] = \pstar(X,V_i) \oplus \langle V_i, V_j \rangle \oplus \pstar(V_j,Y) $ is b-possibly causal in~$\g$. Then  $\{V_1,\dots,V_i,V_j,\dots,V_k\} \subseteq \bfb{\g}$. Since $\g$ is b-amenable relative to $(\mathbf{X,Y})$, $\pstar[q]$ and $\pstar(X,V_i)$ as well must start with $X \rightarrow V_1$.
Then $p$ also starts with $X \rightarrow V_1$ and since $p$ is non-causal, there is at least one collider on $p$.
Let $V_r, r \ge 1,$ be the collider closest to $X$ on $p$, then $V_r \in \bfb{\g[D]}$. Since $p$ is d-connecting given $\mathbf{Z}$, $\mathbf{Z} \cap \De(V_r,\g[D]) \neq \emptyset$. Since $\De(V_r,\g[D]) \subseteq \bfb{\g[D]}$ and since $\bfb{\g[D]} \subseteq \bfb{\g}$, this contradicts $\mathbf{Z} \cap \bfb{\g} = \emptyset$.

\ref{l:ncp:b-pdag-new} We again use proof by contradiction. Suppose neither $X \leftarrow V_1$ nor $V_1 \rightarrow V_2$ are in $\g$ and $\pstar[q]$ is a b-possibly causal path. Since $\g$ is b-amenable relative to $(\mathbf{X,Y})$, $X \to V_2$ is in~$\g$ and  $\{V_2,\dots,V_k\} \subseteq \bfb{\g}$.
Since $V_1 \rightarrow V_2$ is not in $\g$, either $V_1 - V_2$ or $V_1\leftarrow V_2$ is in $\g$. Hence, $V_1 \in \bPossDe(V_2,\g)$. Since $\bPossDe(V_2,\g) \subseteq \bfb{\g}$, it follows that $V_1 \in \bfb{\g}$.

Suppose that $V_1 \leftarrow V_2$ is in $\g$. Then $X \rightarrow V_2 \rightarrow V_1$ and rule $R2$ implies that $X \rightarrow V_1$ is in $\g$.
 Then $X \rightarrow V_1 \leftarrow V_2$, so since $p$ is d-connecting given $\mathbf{Z}$, $\mathbf{Z} \cap \De(V_1,\g[D]) \neq \emptyset$. But $\De(V_1,\g[D]) \subseteq \bPossDe(V_1,\g) \subseteq \bfb{\g}$, which contradicts that $\mathbf{Z} \cap \bfb{\g} = \emptyset$.

Otherwise, $V_1 -V_2$ is in $\g$. Additionally, $X \rightarrow V_2$ is in $\g$ and $\pstar(V_2,Y)$ is b-possibly causal. So since $\pstar$ is b-non-causal (from \ref{l:ncp:pdag}), $X \leftarrow V_1$ must be in $\g$. This contradicts our assumption in \ref{l:ncp:b-pdag-new}. 
\end{proof}

\begin{lemma}
Let $\mathbf{X,Y}$ and $\mathbf{Z}$ be pairwise disjoint node sets in a \mpdag{} $\g$. Let $\mathbf{Z}$ satisfy \bamen{} and \bforb{} relative to $(\mathbf{X},\mathbf{Y})$ in~$\g$. Let $\g[D]$ be a $\DAG$ in $[\g]$ and let $p$ be a shortest proper non-causal path from $\mathbf{X}$ to $\mathbf{Y}$ that is d-connecting given $\mathbf{Z}$ in~$\g[D]$. Let $\pstar$ in~$\g$ be corresponding path to $p$ in $\g[D]$. Then $\pstar$ is a proper b-non-causal definite status path in~$\g$ such that for every subpath $C_l \rightarrow C \leftarrow C_r$ of $\pstar$ there is no edge $\langle C_l,C_r\rangle$ in $\g$.
   \label{lemma:1-pdag}
\end{lemma}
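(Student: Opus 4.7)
The plan is to split the conclusion into three sub-claims about $\pstar$ in $\g$: (A) $\pstar$ is proper and b-non-causal; (B) every collider triple on $\pstar$ in $\g$ has non-adjacent flanks; and (C) $\pstar$ is of definite status in $\g$. Part (A) is quick: properness transfers from $p$, and b-non-causality follows from Lemma~\ref{lemma:non-causal-path-pdag}\ref{l:ncp:pdag} (with $j = i+1$) if $k \ge 2$, while for $k = 1$ the path is the single edge $X \leftarrow Y$ in $\g[D]$ and an undirected $X - Y$ in $\g$ would violate \bamen{}, leaving $X \leftarrow Y$ in $\g$. For (B) and (C) I reduce both to a single statement: every triple $\langle V_{i-1}, V_i, V_{i+1}\rangle$ on $\pstar$ in which $V_i$ has no outgoing edge on the triple in $\g$ must be unshielded. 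The offending configurations --- collider $V_{i-1}\to V_i \leftarrow V_{i+1}$, double-undirected $V_{i-1} - V_i - V_{i+1}$, and mixed $V_{i-1}\to V_i - V_{i+1}$ or $V_{i-1} - V_i\leftarrow V_{i+1}$ --- are exactly the ones that could spoil (B) or (C); any shielded triple in which $V_i$ has an outgoing edge already makes $V_i$ a definite non-collider, and Meek's rule $R1$ forbids mixed patterns from being unshielded in a maximal PDAG.

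The reduced claim is proved by contradiction. Suppose such a triple is shielded at some position $i$, so that $\langle V_{i-1}, V_{i+1}\rangle$ is an edge of $\g$. Form the shortcut $p' = p(X, V_{i-1}) \oplus \langle V_{i-1}, V_{i+1}\rangle \oplus p(V_{i+1}, Y)$ in $\g[D]$, which is strictly shorter than $p$ and still proper. The corresponding path $\pstar'$ in $\g$ is b-non-causal by Lemma~\ref{lemma:non-causal-path-pdag}\ref{l:ncp:pdag} when $i \ge 2$, and by Lemma~\ref{lemma:non-causal-path-pdag}\ref{l:ncp:b-pdag-new} in the boundary case $i = 1$, whose hypotheses $X \leftarrow V_1, V_1 \to V_2 \notin \g$ are forced by the assumption that $V_1$ has no outgoing edge on the triple in $\g$. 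Lemma~\ref{lemma:posdir-path} then lifts non-causality to $p'$ in $\g[D]$. If I also show $p'$ is d-connecting given $\mathbf{Z}$ in $\g[D]$, I contradict the shortestness of $p$.

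The d-connectedness verification of $p'$ is the main obstacle. Only $V_{i-1}$ and $V_{i+1}$ can change status between $p$ and $p'$, so the work reduces to a case analysis over the orientations of $\langle V_{i-1}, V_i\rangle$, $\langle V_i, V_{i+1}\rangle$ and $\langle V_{i-1}, V_{i+1}\rangle$ in $\g[D]$, with acyclicity fixing the direction of the new edge in most subcases. Whenever an endpoint of the bypass becomes a new collider on $p'$, it has $V_i$ as a child in $\g[D]$ under that subcase's orientations, and $V_i$ in turn has a $\mathbf{Z}$-descendant inherited from being a collider on the d-connecting $p$, yielding the needed $\mathbf{Z}$-descendant of the new collider; wherever a non-collider's status is preserved on $p'$, the node was already outside $\mathbf{Z}$ on $p$. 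Matching these orientation subcases against the hypotheses of Lemma~\ref{lemma:non-causal-path-pdag}\ref{l:ncp:b-pdag-new} at the $i=1$ boundary is the most delicate bookkeeping of the argument.
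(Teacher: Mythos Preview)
Your overall architecture is close to the paper's, but your d-connectedness case analysis for the shortcut $p'$ has a real gap: you only discuss the transitions ``flank becomes a new collider on $p'$'' and ``non-collider status is preserved'', and you never handle the transition ``flank was a collider on $p$ but becomes a non-collider on $p'$''. This case does occur under your hypothesis that $V_i$ has no outgoing edge in~$\g$. Concretely, take in $\g[D]$ the orientations $V_{i-2}\to V_{i-1}\leftarrow V_i$, $V_i\to V_{i+1}$, and $V_{i-1}\to V_{i+1}$ (acyclicity forces $V_i\to V_{i+1}$ once $V_i\to V_{i-1}\to V_{i+1}$); then $V_{i-1}$ is a collider on $p$ but a non-collider on $p'$. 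If $V_{i-1}\in\mathbf{Z}$ (which is compatible with $p$ being d-connecting, since a collider may be in $\mathbf{Z}$), then $p'$ is blocked at $V_{i-1}$ and you get no contradiction with the minimality of~$p$. Nothing in your hypotheses on $V_i$ in $\g$ rules this out: from $V_i\to V_{i-1}$ and $V_i\to V_{i+1}$ in $\g[D]$ together with ``$V_i$ has no outgoing edge in $\g$'' you only obtain $V_{i-1}-V_i-V_{i+1}$ in $\g$.

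The paper does not try to treat (B) and (C) by one unified shortcut; it separates them precisely because the collider-to-non-collider transition needs extra leverage that only exists in the non-definite-status situation. For the definite-status claim, the paper picks $V_i$ as the node \emph{closest to $X$} on $\pstar$ that fails to be of definite status. This buys two things. In case~(3) ($V_{i-1}$ collider on $p$, non-collider on $q$, $V_{i-1}\in\mathbf{Z}$), the choice guarantees $V_{i-1}$ is already of definite status on $\pstar$; since it is a collider in $\g[D]$ this forces $V_{i-2}\to V_{i-1}\leftarrow V_i$ in $\g$, giving $V_i$ an outgoing edge on $\pstar$ and contradicting that $V_i$ is not of definite status. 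In the symmetric case~(4) ($V_{i+1}$ collider on $p$, non-collider on $q$, $V_{i+1}\in\mathbf{Z}$), the paper argues $V_{i+1}$ cannot be of definite status on $\pstar$ either (else $V_i$ would be), so $\langle V_i,V_{i+2}\rangle$ exists and a \emph{second} shortcut $p(X,V_i)\oplus\langle V_i,V_{i+2}\rangle\oplus p(V_{i+2},Y)$ is used to finish. Only after definite status is established does the paper prove the unshielded-collider claim (B), where the problematic transition cannot arise because $C_l\to C$ and $C_r\to C$ in $\g$ force $C_l,C_r$ to be non-colliders on $p$ in $\g[D]$. Your unified statement loses exactly the information (closest-to-$X$, and $V_i$ specifically non-definite-status rather than merely ``no outgoing edge'') that the paper exploits to close these cases.
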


\begin{proof}
This lemma is related to Lemma 51 from \cite{perkovic16}. 
Our lemma additionally contains the result that if $\langle C_l,C,C_r \rangle$ is a subpath of $\pstar$, then there is no edge $\langle C_l,C_r \rangle $ in $\g$. The proofs of this lemma and Lemma 51 from \cite{perkovic16} overlap for cases \ref{l:11}-\ref{l:13} and then diverge after that. 

Path $\pstar$ is proper and b-non-causal (\ref{l:ncp:pdag} in Lemma~\ref{lemma:non-causal-path-pdag}) in $\g$, so it is only left to prove that it is of definite status and that for any subpath $C_l \rightarrow C \leftarrow C_r$ of $\pstar$ there is no edge $\langle C_l,C_r\rangle$ in $\g$.
Let  $\pstar = \langle X = V_{0}, V_{1},\dots ,V_{k} = Y \rangle, k > 1, X \in \mathbf{X}, Y \in \mathbf{Y}$. We first prove that $\pstar$ is of definite status, by contradiction.

Hence, suppose that a node on $\pstar$ is not of definite status. Let $V_i, i\ge 1,$ be the node closest to $X$ on $\pstar$ that is not of definite status.
Then  $\langle V_{i-1}, V_{i}, V_{i+1} \rangle$ is shielded in $\g$ and there is an edge between $V_{i-1}$ and $V_{i+1}$ in $\g$. Let $q = p(X,V_{i-1}) \oplus \langle V_{i-1} , V_{i+1} \rangle \oplus p(V_{i+1},Y)$ in $\g[D]$. Let $\pstar[q]$ be the path corresponding to $q$ in $\g$. Then $\pstar[q]$ is proper and b-non-causal (Lemma~\ref{lemma:non-causal-path-pdag}) in $\g$. Hence, $q$ is also a proper non-causal path (Lemma~\ref{lemma:posdir-path}). Since $p$ is a shortest proper b-non-causal path from $X$ to $Y$ that is d-connecting given $\mathbf{Z}$, it follows that $q$ must be blocked by $\mathbf{Z}$.
The collider/non-collider status of all nodes, except possibly $V_{i-1}$ and $V_{i+1}$, is the same on $p$ and $q$. Hence, $V_{i-1}$ or $V_{i+1}$ block $q$, so $V_{i-1} \neq X$ or $V_{i+1} \neq Y$.
We now discuss the different cases for the collider/non-collider status of $V_{i-1}$ and $V_{i+1}$ on $p$ and $q$ and derive a contradiction in each case.

\begin{enumerate}[label=(\arabic*)]
\item\label{l:11}  $V_{i-1}$ is a non-collider on $p$, a collider on $q$ and $\De(V_{i-1},\g[D]) \cap \mathbf{Z} = \emptyset$.
Then $V_{i+1} \rightarrow V_{i-1} \rightarrow V_{i}$  and rule $R2$ implies that $V_{i} \leftarrow V_{i+1}$ is in $\g[D]$. Since $p$ is d-connecting given $\mathbf{Z}$, $\De(V_{i},\g[D]) \cap \mathbf{Z} \neq \emptyset$. As $\De(V_{i},\g[D]) \subseteq \De(V_{i-1},\g[D])$, this contradicts $\De(V_{k-1},\g[D]) \cap \mathbf{Z} = \emptyset$.

\item $V_{i+1}$ is a non-collider on $p$, a collider on $q$ and $\De(V_{i+1},\g[D]) \cap \mathbf{Z} = \emptyset$.
This case is symmetric to case \ref{l:11} and the same argument leads to a contradiction.

\item\label{l:13} $V_{i-1}$ is a collider on $p$, a non-collider on $q$ and $V_{i-1} \in \mathbf{Z}$.
Since $V_{i-1}$ is of definite status on $\pstar$ it follows that $V_{i-2} \rightarrow V_{i-1} \leftarrow V_{i}$ is in $\g$. This implies $V_{i}$ is a definite non-collider on $\pstar$, which is a contradiction.

\item\label{l:14} $V_{i+1}$ is a collider on $p$, a non-collider on $q$ and $V_{i+1} \in \mathbf{Z}$. Then $V_{i} \rightarrow V_{i+1} \leftarrow V_{i+2}$ and $V_{i-1} \leftarrow V_{i+1}$ is in $\g[D]$. $V_{i+1}$ is not of definite status on $\pstar$, otherwise $V_{i}$ would be of definite status on $\pstar$. 
Thus, there is an edge $\langle V_{i},V_{i+2} \rangle$ in $\g$. The path $r = p(X,V_{i}) \oplus \langle V_{i} , V_{i+2} \rangle \oplus p(V_{i+2},Y)$ is proper, b-non-causal (Lemma~\ref{lemma:non-causal-path-pdag}) and shorter than $p$. Hence, $r$ must be blocked by $\mathbf{Z}$ in $\g[D]$.  
The collider/non-collider status of all nodes except possibly $V_{i}$ and $V_{i+2}$ is the same on $r$ and $p$, so either $V_{i}$ or $V_{i+2}$ must block $r$.

Since $V_{i} \rightarrow V_{i+1} \rightarrow V_{i-1}$ is in $\g$, rule $R2$ implies that $V_{i-1} \leftarrow V_{i}$ is in $\g$. Since $V_{i-1} \leftarrow V_{i}$ is on both $r$ and $p$, $V_{i}$ cannot block $r$. 
Additionally, $V_{i+1} \leftarrow V_{i+2}$ is in $\g[D]$ so $V_{i+2}$ is a non-collider on $p$. Thus, $V_{i+2}$ must be a collider on $r$ and $\De(V_{i+2},\g[D]) \cap \mathbf{Z} = \emptyset$. However, by assumption in \ref{l:14}, $V_{i+1} \in \mathbf{Z}$ and $V_{i+1} \in \De(V_{i+2},\g[D])$.
\end{enumerate}

Lastly, let $C_l \rightarrow C \leftarrow C_r$ be a subpath of $\pstar$ and suppose for a contradiction that there is an edge $\langle C_l,C_r\rangle$ in $\g$. Let $\pstar[q] = \pstar(X,C_l)\oplus \langle C_l,C_r\rangle \oplus \pstar(C_r,Y)$. Then $\pstar[q]$ is proper and b-non-causal ( Lemma~\ref{lemma:non-causal-path-pdag}) in $\g[D]$. Let $q$ in $\g[D]$ be the path corresponding to $\pstar[q]$ in $\g$. Then $q$ is a proper non-causal path from $X$ to $Y$ that is shorter than $p$, so $q$ must be blocked by $\mathbf{Z}$. Then as above, either $C_l$ or $C_r$ must block $q$. 

Since $C_l$ ($C_r$) is a non-collider on $p$, it must be a collider on $q$ and $\De(C_l,\g[D]) \cap \mathbf{Z} \neq \emptyset$ ($\De(C_r,\g[D]) \cap \mathbf{Z} \neq \emptyset$). Since $p$ is d-connecting given $\mathbf{Z}$, $\De(C,\g[D]) \cap \mathbf{Z} \neq \emptyset$. Additionally, $\De(C_l,\g[D]) \supseteq \De(C,\g[D])$ ($\De(C_r,\g[D]) \supseteq \De(C,\g[D])$), which contradicts $\De(C_l,\g[D]) \cap \mathbf{Z} \neq \emptyset$ $(\De(C_r,\g[D]) \cap \mathbf{Z} \neq \emptyset$).
\end{proof}

\begin{lemma}
 Let $\mathbf{X,Y}$ and $\mathbf{Z}$ be pairwise disjoint node sets in a \mpdag{} $\g$. Let $\mathbf{Z}$ satisfy \bamen{} and \bforb{} relative to $(\mathbf{X},\mathbf{Y})$ in~$\g$. Let $\g[D]$ be a $\DAG$ in $[\g]$ and let $p$ be a path with minimal $\distancefrom{\mathbf{Z}}$ among the shortest proper non-causal paths from $\mathbf{X}$ to $\mathbf{Y}$ that are d-connecting given $\mathbf{Z}$ in~$\g[D]$. Let $\pstar$ in $\g$ be the path corresponding to $p$ in $\g[D]$. Then $\pstar$ is a proper b-non-causal definite status path from $\mathbf{X}$ to $\mathbf{Y}$ that is d-connecting given $\mathbf{Z}$ in~$\g$.
   \label{lemma:2-pdag}
\end{lemma}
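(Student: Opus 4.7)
The plan is to verify the four properties of $\pstar$ stated in the lemma: proper, b-non-causal, of definite status, and d-connecting given $\mathbf{Z}$ in $\g$. The first three follow immediately from Lemma~\ref{lemma:1-pdag}, which applies because $p$ is in particular a shortest proper non-causal d-connecting path from $\mathbf{X}$ to $\mathbf{Y}$ in $\g[D]$. The remaining task is to verify d-connection of $\pstar$ in $\g$, which amounts to checking that every definite non-collider on $\pstar$ lies outside $\mathbf{Z}$ and every collider on $\pstar$ has a descendant in $\mathbf{Z}$ in $\g$.

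The non-collider case is routine. A definite non-collider $V_j$ on $\pstar$ either has an outgoing edge on $\pstar$ in $\g$ (which is also in $\g[D]$, making $V_j$ a non-collider on $p$), or has two undirected adjacent edges with $V_{j-1}, V_{j+1}$ non-adjacent in $\g$ (in which case $V_j$ being a collider on $p$ would introduce an unshielded collider in $\g[D]$ absent from $\g$, violating $\g[D] \in [\g]$). Either way $V_j$ is a non-collider on $p$, so $V_j \notin \mathbf{Z}$ by d-connection of $p$ in $\g[D]$.

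The collider case is the heart of the proof. Fix a collider $V = V_j$ on $\pstar$; since $V_{j-1} \to V \leftarrow V_{j+1}$ is in $\g$ it is in $\g[D]$ too, so $V$ is a collider on $p$ with a descendant $Z \in \mathbf{Z}$ in $\g[D]$ via some shortest (hence unshielded) directed path $t = \langle V \to T_1 \to \dots \to T_l = Z\rangle$ in $\g[D]$. Suppose for contradiction that $V$ has no directed path to $\mathbf{Z}$ in $\g$. The corresponding path $t^{\g}$ in $\g$ is b-possibly causal and unshielded; if $V \to T_1$ were in $\g$ then Lemma~\ref{lemma:unshielded-edgeanalog} would make $t^{\g}$ entirely directed in $\g$, giving $V$ a directed path to $\mathbf{Z}$, a contradiction. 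Hence $V - T_1$ is in $\g$. A Meek-rule analysis then forces $V_{j-1} \to T_1 \leftarrow V_{j+1}$ to be an unshielded collider in $\g$: R2 rules out $T_1 \to V_{j-1}$ and $T_1 \to V_{j+1}$ (each would force $T_1 \to V$, contradicting $V - T_1$); R1 applied to $V_{j-1} \to V - T_1$ combined with the non-adjacency of $V_{j-1}, V_{j+1}$ (Lemma~\ref{lemma:1-pdag}) forces adjacency of $V_{j-1}$ with $T_1$, and symmetrically of $V_{j+1}$ with $T_1$; R3 rules out both of these edges being undirected; and a further R1 application on $V_{j-1} \to T_1 - V_{j+1}$ rules out exactly one of them being undirected.

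I would then construct the alternative path $p' = p(X, V_{j-1}) \oplus V_{j-1} \to T_1 \leftarrow V_{j+1} \oplus p(V_{j+1}, Y)$, which is a proper non-causal path from $\mathbf{X}$ to $\mathbf{Y}$ of the same length as $p$ and d-connecting given $\mathbf{Z}$ in $\g[D]$: the only new collider $T_1$ has descendant $Z$ via $t(T_1, T_l)$, while all other colliders and non-colliders retain their status from $p$. The main obstacle is to show that $p'$ has strictly smaller distance-from-$\mathbf{Z}$ than $p$, contradicting the minimality of $p$. Since $V - T_1$ is in $\g$ and $t^{\g}(T_1, T_l)$ is b-possibly causal of length $l - 1$, the upper bound $d_{\g}(T_1, \mathbf{Z}) \le l - 1$ is immediate; for the strict inequality $d_{\g}(T_1, \mathbf{Z}) < d_{\g}(V, \mathbf{Z})$ I would choose $t$ so that $t^{\g}$ is itself a shortest b-possibly causal path from $V$ to $\mathbf{Z}$ in $\g$ (giving $d_{\g}(V, \mathbf{Z}) = l$), or otherwise iterate the construction by replacing $V$ with $T_1$ until strict reduction is achieved. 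The resulting contradiction with the minimality of $p$'s distance-from-$\mathbf{Z}$ closes the proof.
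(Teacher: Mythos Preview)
Your overall strategy matches the paper's: invoke Lemma~\ref{lemma:1-pdag} for the first three properties, then for each collider $V$ on $\pstar$ with $V - T_1$ in $\g$, use Meek rules together with the non-adjacency of $V_{j-1}$ and $V_{j+1}$ (from Lemma~\ref{lemma:1-pdag}) to force $V_{j-1} \to T_1 \leftarrow V_{j+1}$, and then replace $V$ by $T_1$ to contradict minimality. Your Meek-rule derivation is correct and essentially identical to the paper's.

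However, there is a genuine gap. Your construction of $p' = p(X, V_{j-1}) \oplus \langle V_{j-1}, T_1, V_{j+1} \rangle \oplus p(V_{j+1}, Y)$ tacitly assumes that $T_1$ is not already a node on $p$ and that $T_1 \notin \mathbf{X} \cup \mathbf{Y}$. If $T_1$ lies on $p$, then $p'$ is not even a path; if $T_1 \in \mathbf{X}$, then $p'$ is not proper; if $T_1 \in \mathbf{Y}$, then one should instead pass to the shorter path $p(X, V_{j-1}) \oplus \langle V_{j-1}, T_1\rangle$, but showing that this shorter path is non-causal requires an additional argument via $T_1 \notin \bfb{\g}$ (which holds because otherwise $Z \in \bPossDe(T_1,\g) \subseteq \bfb{\g}$ would violate \bforb{}). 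The paper devotes roughly half of its proof of this lemma to exactly these three sub-cases and the case where $T_1$ lies on $p(X, V_{j-1})$ versus $p(V_{j+1}, Y)$; in each, one constructs a \emph{strictly shorter} proper non-causal d-connecting path, contradicting the length-minimality of $p$ rather than its $\distancefrom{\mathbf{Z}}$.

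A smaller issue: you start from a shortest directed path $t$ in $\g[D]$, whereas the paper starts from a shortest b-possibly causal definite status path $\pstar[d]$ in $\g$. Your choice is fine for the $\distancefrom{\mathbf{Z}}$ comparison (which, per Definition~\ref{def:distance from Z}, is computed in the graph where $p$ lives, namely $\g[D]$, so replacing $V$ by $T_1$ strictly decreases the sum without any need to ``iterate''), but be careful not to conflate distances in $\g$ and $\g[D]$ as your notation $d_{\g}$ suggests.
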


\begin{proof}
This lemma is related to Lemma~52 from \cite{perkovic16}. The line of reasoning used in this first part of this proof overlaps with the proof of Lemma~52. We will point out where the two proofs diverge.

From Lemma~\ref{lemma:1-pdag} we know that $\pstar$ is a proper b-non-causal definite status path from $\mathbf{X}$ to $\mathbf{Y}$ in $\g$.
We only need to prove that it is also d-connecting given $\mathbf{Z}$ in $\g$. 

Since $p$ is d-connecting given $\mathbf{Z}$ in $\g[D]$ and $\pstar$ is of definite status, it follows that no definite non-collider on $\pstar$ is in $\mathbf{Z}$ and that every collider on $\pstar$ has a possible descendant in $\mathbf{Z}$ (Lemma~\ref{lemma:non-causal-path-pdag}). Since every collider on $\pstar$ has a b-possibly causal path to $\mathbf{Z}$, by Lemma~\ref{lemma:unshielded-analog} there is b-possibly causal definite status path from every collider on $\pstar$ to a node in $\mathbf{Z}$. Let $C$ be an arbitrary collider on $\pstar$ and let $\pstar[d]$ be a shortest b-possibly causal definite status path from $C$ to a node in $\mathbf{Z}$. It is only left to show that $\pstar[d]$ is causal in $\g$, since then $C \in \An(\mathbf{Z},\g)$.

If $\pstar[d]$ starts with a directed edge out of $C$, then $\pstar[d]$ is causal in $\g$ (Lemma~\ref{lemma:unshielded-edgeanalog}). Otherwise, $\pstar[d] = C - S \dots Z, Z \in \mathbf{Z}$ (possibly $S=Z$). We will prove that this leads to a contradiction.
Hence, let $ C_{l} \rightarrow C \leftarrow C_{r}$ be a subpath of $\pstar$. 

From this point onwards, this proof deviates somewhat from the proof of Lemma~52 from \cite{perkovic16}, due to the additional result in Lemma~\ref{lemma:1-pdag}.
Since $C_l \rightarrow C - S$ ($C_r \rightarrow C - S$) is in $\g$, rule $R1$ and $R2$ imply that either $C_l \rightarrow S$ or $C_l -S$ ($C_r \rightarrow S$ or $C_r -S$) is in $\g$. Suppose that $C_l - S$ is in $\g$. Since $C_l \notin \Adj(C_r,\g)$ (Lemma~\ref{lemma:1-pdag}), $C_r - S$ must be in $\g$, otherwise $C_l - S \leftarrow C_r$ violates $R1$ in $\g$. But then $C_l \rightarrow C \leftarrow C_r$, $C_l -S -C_r$ and $C -S$ violate $R3$ in $\g$. 

Hence, $C_l \rightarrow S$ is in $\g$. Then $C_r \rightarrow S$ must be in $\g$, otherwise $C_l \notin \Adj(C_r,\g)$ and $C_l \rightarrow S - C_r$ violates $R1$. Now, depending on whether $S$ is a node on $p$, we can derive the final contradiction.

Suppose $S$ is not on $p$. Then if $S \notin \mathbf{X} \cup \mathbf{Y}$, $p(X,C_l) \oplus \langle C_l ,S,C_r\rangle \oplus p(C_r,Y)$ is a proper non-causal path from $\mathbf{X}$ to $\mathbf{Y}$ in $\g[D]$ that is of the same length as $p$, but with a shorter distance-from-$\mathbf{Z}$ than $p$ and d-connecting given $\mathbf{Z}$. This contradicts our choice of $p$. Otherwise, suppose $S \in \mathbf{X}$. Then $\langle S ,C_r \rangle \oplus p(C_r, Y)$ contradicts our choice of $p$. Otherwise, $S \in \mathbf{Y}$. Then $S \notin \bfb{\g}$ otherwise, $Z \in \bfb{\g}$ since $Z \in \bPossDe(S,\g)$. Since $S \notin \bfb{\g}$, it follows that $S \notin \bfb{\g[D]}$, so $p(X,C_r) \oplus \langle C_l,S \rangle$ is a non-causal path from $\mathbf{X}$ to $\mathbf{Y}$ in $\g[D]$. Then $p(X,C_r) \oplus \langle C_l,S \rangle$ contradicts our choice of $p$.

Otherwise, $S$ must be on $p$. Hence, $S \notin \mathbf{X}$. Suppose first that $S$ is on $p(X,C_r)$. Let $q= p(X,S) \oplus \langle S,C_r\rangle \oplus p(C_r, Y)$. Since $q$ is proper, non-causal and shorter than $p$, we only need to prove that $q$ is d-connecting given $\mathbf{Z}$ to derive a contradiction. For this we only need to discuss the collider/non-collider status of $S$ on $q$. If $S$ is a collider on $q$, then $q$ is d-connecting given $\mathbf{Z}$. Otherwise, $S$ is a non-collider on $q$. Then since $S \leftarrow C_r$ is on $q$, $S$ must also be a non-collider on $p$. Since $p$ is d-connecting given $\mathbf{Z}$, $S \notin \mathbf{Z}$. Thus, $q$ must be d-connecting given $\mathbf{Z}$ in $\g$. 

Otherwise, $S$ is on $p(C_r,Y)$. Then let $r = p(X,C_l) \oplus \langle C_l,S \rangle \oplus p(S,Y)$. Since $S \notin \bfb{\g}$ (otherwise $Z \in \bfb{\g}$ since $Z \in \bPossDe(S,\g)$) and since $r$ is proper, it follows that $r$ is a non-causal path. Hence, we only need to prove that $r$ is d-connecting given $\mathbf{Z}$ to derive a contradiction.  For this we again only discuss the collider/non-collider status of $S$ on $q$. If $S$ is a collider on $r$, $r$ is d-connecting given $\mathbf{Z}$. Otherwise, $S$ is a non-collider on $r$. Then since $C_l \rightarrow S$ is on $q$, $S$ must also be a non-collider on $p$. Since $p$ is d-connecting given $\mathbf{Z}$, $S \notin \mathbf{Z}$. Thus, $r$ must be d-connecting given $\mathbf{Z}$ in $\g$. 
\end{proof}

\subsection{PROOF OF THEOREM~\ref{theorem:constructive-set-mpdag}} \label{subsec:proof-constr}
\begin{proofof}[Theorem~\ref{theorem:constructive-set-mpdag}]
This theorem is related to Theorem~14 from \cite{perkovic16}. This proof relies on similar line of reasoning however since Theorem~14 from \cite{perkovic16} states a somewhat different result and relies on a few lemmas for the proof, we do not make a direct comparison between the two as we did with the result presented in Section~\ref{sec:proofadjust-mpdag}.

We only need to prove that if there is a set that satisfies the b-adjustment criterion relative to $(\mathbf{X,Y})$ in $\g$, then $\badjustb{\g}$ also satisfies the b-adjustment criterion relative to $(\mathbf{X,Y})$ in $\g$.
Hence, assume that $\mathbf{Z}$ satisfies the b-adjustment criterion relative to $(\mathbf{X,Y})$ in $\g$ and that $\badjustb{\g}$ does not satisfy the b-adjustment criterion relative to $(\mathbf{X,Y})$ in $\g$. We will show that this leads to a contradiction.

Since $\mathbf{Z}$ is an adjustment set relative to $(\mathbf{X,Y})$ in $\g$ (Theorem~\ref{theorem:gac-pdag}), $\g$ is b-amenable relative to $(\mathbf{X,Y})$. By construction $\badjustb{\g}$ satisfies \bforb{}, so it must violate \bblck{}.
Let $p =\langle X = V_{0}, V_{1},\dots ,V_{k} = Y \rangle, k \ge 1, X \in \mathbf{X}, Y \in \mathbf{Y}$ be a shortest proper b-non-causal definite status paths from $\mathbf{X}$ to $\mathbf{Y}$ in $\g$ that is d-connecting given $\badjustb{\g}$. Since $\mathbf{Z}$ blocks $p$, $k >1$. So there is at least one non-endpoint node on $p$. 

Since $p$ is d-connecting given $\badjustb{\g}$ and $\badjustb{\g} = \bPossAn(\mathbf{X} \cup \mathbf{Y}, \g)\setminus(\mathbf{X} \cup \mathbf{Y} \cup \bfb{\g})$ any collider on $p$ is in $\bPossAn(\mathbf{X} \cup \mathbf{Y}, \g)$. Additionally, no collider $C$ on $p$ is in $\bfb{\g}$ otherwise, $\De(C,\g) \subseteq \bfb{\g}$ and $\bfb{\g} \cap \badjustb{\g} = \emptyset$ contradicts that $p$ is d-connecting given $\badjustb{\g}$. Thus, every collider on $p$ is in $\bPossAn(\mathbf{X} \cup \mathbf{Y}, \g) \setminus \bfb{\g}$.

Any definite non-collider on $p$ is a b-possible ancestor of $X$, $Y$ or a collider on $p$. Hence, any definite non-collider on $p$ is in $\bPossAn(\mathbf{X} \cup \mathbf{Y},\g)$. Since $p$ is d-connecting given $\badjustb{\g}$, no definite non-collider on $p$ is in $\badjustb{\g}$. Then since  $\badjustb{\g} = \bPossAn(\mathbf{X} \cup \mathbf{Y}, \g)\setminus(\mathbf{X} \cup \mathbf{Y} \cup \bfb{\g})$, every definite non-collider on $p$ is in $\mathbf{X} \cup \mathbf{Y} \cup \bfb{\g}$.
Since $p$ is proper, no definite non-collider on $p$ is in $\mathbf{X}$. Additionally, if there was a definite non-collider $V$ on $p$ such that $V \in \mathbf{Y} \setminus \bfb{\g}$, then $p(X,V)$ would be a shorter proper b-non-causal definite status path in $\g$ that is d-connecting given $\badjustb{\g}$. Hence, any definite non-collider on $p$ must be in $\bfb{\g}$. 

Suppose that there is no collider on $p$. Then since $\mathbf{Z}$ blocks $p$, a non-collider on $p$ must be in $\mathbf{Z}$. This contradicts that $\mathbf{Z}$ satisfies \bforb{}.
Thus, there is a collider on $p$ so let $V_{i-1} \rightarrow V_{i} \leftarrow V_{i+1}$ be a subpath of $p$. If there is a definite non-collider on $p$, then $V_{i-1}$ or $V_{i+1}$ is a non-collider on $p$. Suppose without loss of generality that $V_{i-1}$ is a definite non-collider on $p$. Then $V_{i-1} \in \bfb{\g}$. Since $V_{i-1} \rightarrow V_i$ is in $\g$, $V_i \in \bfb{\g}$, which contradicts that $V_i$ is a collider on $p$.
Thus, there is no definite non-collider on $p$. 

Since there is at least one collider on $p$ and no definite non-collider is on $p$, it follows that $p$ is of the form $ X \rightarrow V_1 \leftarrow Y$ in $\g$. Since $V_1 \in \bPossAn(\mathbf{X} \cup \mathbf{Y},\g) \setminus \bfb{\g}$, let $q$ be a shortest b-possibly causal definite status path from $V_1$ to a node in $\mathbf{X} \cup \mathbf{Y}$. Then $q = \langle V_1, \dots , V\rangle$, for some $V \in \mathbf{X} \cup \mathbf{Y}$. Then $V \in \mathbf{X}$ otherwise, $V_1 \in \bfb{\g} $.

Thus, $r = (-p)(Y,V_1) \oplus q$ is a b-possibly causal path, so let $r'$ be an unshielded subsequence of $r$ that forms a b-possibly causal path from $Y$ to $\mathbf{X}$ in $\g$. Since $(-r)$ is a proper path with respect to $\mathbf{X}$, $(-r')$ is also a proper path with respect to $\mathbf{X}$. Then $(-r')$ must be a b-non-causal path otherwise, $Y \in \bfb{\g}$ which also implies $V_1 \in \bfb{\g}$. 
Since $r'$ is also unshielded, $(-r')$ is a proper b-non-causal definite status path from $\mathbf{X}$ to $\mathbf{Y}$ in $\g$. Thus, $\mathbf{Z}$ must block $(-r')$. Since $r'$ is b-possibly causal, $(-r')$ does not contain a collider, so a definite non-collider on $(-r')$ must be in $\mathbf{Z}$. However, all definite non-colliders on $(-r')$ are also on $q$, so $\mathbf{Z}$ cannot block both $(-r')$ and $p$ in $\g$.
\end{proofof}

\section{EMPIRICAL STUDY} \label{sec:study}

The following empirical study compares the runtimes of local IDA and our semi-local IDA on $\CPDAG$s.
We consider the $20\ 000$ simulation settings described in the paper. The times are recorded in seconds on an Intel(R) Core(TM) i7-4765T CPU \@ 2.00GHz processor running under Fedora 24 and using R version 3.4.0 and pcalg version 2.4-6.)
The summary is given in Table~\ref{table:sum}.
\begin{center}
\begin{tabular}{ |c|c|c|c| } 
 \hline
				& 	Median	& Mean		& Max \\ \hline 
 Local IDA 		& 	0.003	& 0.003		& 0.009 \\ \hline 
 Semi-local IDA &   0.003	& 0.016		& 4.881 \\ 
 \hline
\end{tabular}
\captionof{table}[tb]{Median, mean and max computation times of local IDA and semi-local IDA.}
 \label{table:sum}
\end{center}
We see that the median computation times are identical for both methods. The mean and maximum computation times, however, are larger for semi-local IDA. This indicates some outliers in the computation time of semi-local IDA. This could be explained by the presence of a small number of $\CPDAG$s where our semi-local method is forced to orient a large subgraph of the $\CPDAG$. 

We also investigated the difference in runtimes between semi-local IDA and local IDA as a function of the number of variables ($p$) and the expected neighborhood size ($E[N]$), these results are given in Table~\ref{table:meanrunp} and Table~\ref{table:meanrunen} respectively.
We see that the mean difference in runtimes increases with $p$, while there is not a very clear relationship with neighborhood size.

\begin{center}
\begin{tabular}{ |c|c| } 
 \hline
 $p$ 		& 	mean difference \\ \hline 
 20	  &	0.009 	\\ \hline
30	& 	0.007  \\	\hline
40	& 	0.010 \\\hline
50	& 	0.009 \\\hline
60	& 	0.016 	\\		\hline 
70	& 	0.013 \\\hline
80	&	0.014 	\\\hline
90	&	0.015 	\\\hline
100	& 	0.021\\
 \hline
\end{tabular}
\captionof{table}[tb]{The mean runtime difference aggregated according to the number of variables ($p$).}
\label{table:meanrunp}
\end{center}

\begin{center}
\begin{tabular}{ |c|c| } 
 \hline
 $E[N]$ 		& 	mean difference \\ \hline 
3	& 	0.016  \\	\hline
4	& 	0.013 \\\hline
5	& 	0.013 \\\hline
6	& 	0.011 	\\		\hline 
7	& 	0.009 \\\hline
8	&	0.011 	\\\hline
9	&	0.015 	\\\hline
10	& 	0.014\\
 \hline
\end{tabular}
\captionof{table}[tb]{The mean runtime difference aggregated according to the expected neighborhood size ($E[N]$).}
\label{table:meanrunen}
\end{center}

% the bibliography
\vskip 0.2in
\bibliographystyle{apalike}
\bibliography{biblioteka}

\end{document}